\documentclass[12pt]{amsart}
\usepackage{}
\usepackage{amsmath}
\usepackage{amsfonts}
\usepackage{amssymb}
\usepackage[all,cmtip]{xy}           

\usepackage{bbding}
\usepackage{txfonts}
\usepackage[shortlabels]{enumitem}
\usepackage{ifpdf}
\ifpdf
\usepackage[colorlinks,final,backref=page,hyperindex]{hyperref}
\else
\usepackage[colorlinks,final,backref=page,hyperindex,hypertex]{hyperref}
\fi
\usepackage{tikz}
\usepackage[active]{srcltx}

\usepackage{difftrees}
\topmargin -.8cm \textheight 21cm \oddsidemargin 0cm \evensidemargin -0cm \textwidth 16cm

\makeatletter

\newtheorem{theorem}{Theorem}[section]
\newtheorem{proposition}[theorem]{Proposition}
\newtheorem{lemma}[theorem]{Lemma}
\newtheorem{corollary}[theorem]{Corollary}
\newtheorem{prop-def}{Proposition-Definition}[section]

\theoremstyle{definition}
\newtheorem{definition}[theorem]{Definition}

\newtheorem{remark}[theorem]{Remark}
\newtheorem{example}[theorem]{Example}

\newcommand{\nc}{\newcommand}

\newcommand {\emptycomment}[1]{}


\nc{\delete}[1]{{}}
\nc{\mmargin}[1]{}

\nc{\mlabel}[1]{\label{#1}}  
\nc{\mcite}[1]{\cite{#1}}  
\nc{\mref}[1]{\ref{#1}}  
\nc{\meqref}[1]{\eqref{#1}}  
\nc{\mbibitem}[1]{\bibitem{#1}} 

\delete{
	\nc{\mlabel}[1]{\label{#1}  
		{\hfill \hspace{1cm}{\bf{{\ }\hfill(#1)}}}}
	\nc{\mcite}[1]{\cite{#1}{{\bf{{\ }(#1)}}}}  
	\nc{\mref}[1]{\ref{#1}{{\bf{{\ }(#1)}}}}  
	\nc{\meqref}[1]{\eqref{#1}{{\bf{{\ }(#1)}}}}  
	\nc{\mbibitem}[1]{\bibitem[\bf #1]{#1}} 
}


 \font\cyrs=wncyr7


\newcommand{\bk}{{\mathbf{k}}}


\nc{\vep}{\varepsilon}
\nc{\bin}[2]{ (_{\stackrel{\scs{#1}}{\scs{#2}}})}  
\nc{\binc}[2]{(\!\! \begin{array}{c} \scs{#1}\\
		\scs{#2} \end{array}\!\!)}  
\nc{\bincc}[2]{  ( {\scs{#1} \atop
		\vspace{-1cm}\scs{#2}} )}  
\nc{\oline}[1]{\overline{#1}}
\nc{\mapm}[1]{\lfloor\!|{#1}|\!\rfloor}
\nc{\bs}{\bar{S}}
\nc{\cast}{{\,\mbox{\raisebox{.8pt}{$\scriptstyle \circledast$}}\,}}
\nc{\la}{\longrightarrow}
\nc{\ot}{\otimes}
\nc{\rar}{\rightarrow}
\nc{\dar}{\downarrow}
\nc{\dap}[1]{\downarrow \rlap{$\scriptstyle{#1}$}}
\nc{\defeq}{\stackrel{\rm def}{=}}
\nc{\dis}[1]{\displaystyle{#1}}
\nc{\dotcup}{\ \displaystyle{\bigcup^\bullet}\ }
\nc{\hcm}{\ \hat{,}\ }
\nc{\hts}{\hat{\otimes}}
\nc{\hcirc}{\hat{\circ}}
\nc{\lleft}{[}
\nc{\lright}{]}
\nc{\curlyl}{\left \{ \begin{array}{c} {} \\ {} \end{array}
	\right .  \!\!\!\!\!\!\!}
\nc{\curlyr}{ \!\!\!\!\!\!\!
	\left . \begin{array}{c} {} \\ {} \end{array}
	\right \} }
\nc{\longmid}{\left | \begin{array}{c} {} \\ {} \end{array}
	\right . \!\!\!\!\!\!\!}
\nc{\ora}[1]{\stackrel{#1}{\rar}}
\nc{\ola}[1]{\stackrel{#1}{\la}}
\nc{\scs}[1]{\scriptstyle{#1}} \nc{\mrm}[1]{{\rm #1}}
\nc{\dirlim}{\displaystyle{\lim_{\longrightarrow}}\,}
\nc{\invlim}{\displaystyle{\lim_{\longleftarrow}}\,}
\nc{\dislim}[1]{\displaystyle{\lim_{#1}}} \nc{\colim}{\mrm{colim}}
\nc{\mvp}{\vspace{0.3cm}} \nc{\tk}{^{(k)}} \nc{\tp}{^\prime}
\nc{\ttp}{^{\prime\prime}} \nc{\svp}{\vspace{2cm}}
\nc{\vp}{\vspace{8cm}}
\nc{\modg}[1]{\!<\!\!{#1}\!\!>}
\nc{\intg}[1]{F_C(#1)}
\nc{\lmodg}{\!<\!\!}
\nc{\rmodg}{\!\!>\!}
\nc{\cpi}{\widehat{\Pi}}
\nc{\ssha}{{\mbox{\cyrs X}}} 
\nc{\tsha}{{\mbox{\cyrt X}}}
\nc{\shpr}{\diamond}    
\nc{\labs}{\mid\!}
\nc{\rabs}{\!\mid}
\nc{\di}{\diamond}


\nc{\ad}{\mrm{ad}}
\nc{\rRB}{\mathsf{rRB}}
\nc{\cocrRB}{\mathsf{cocrRB}}
\nc{\PH}{\mathsf{PH}}
\nc{\cocPH}{\mathsf{cocPH}}
\nc{\ann}{\mrm{ann}}
\nc{\Aut}{\mrm{Aut}}
\nc{\Der}{\mrm{Der}}
\nc{\Sym}{\mrm{Sym}}
\nc{\br}{\mrm{bre}}
\nc{\can}{\mrm{can}}
\nc{\Cont}{\mrm{Cont}}
\nc{\rchar}{\mrm{char}}
\nc{\cok}{\mrm{coker}}
\nc{\de}{\mrm{dep}}
\nc{\dtf}{{R-{\rm tf}}}
\nc{\dtor}{{R-{\rm tor}}}

\nc{\Dif}{\mrm{Diff}}
\nc{\Div}{\mrm{Div}}
\nc{\End}{\mrm{End}}
\nc{\Ext}{\mrm{Ext}}
\nc{\Fil}{\mrm{Fil}}
\nc{\Fr}{\mrm{Fr}}
\nc{\Frob}{\mrm{Frob}}
\nc{\Gal}{\mrm{Gal}}
\nc{\GL}{\mrm{GL}}
\nc{\Gr}{\mrm{Gr}}
\nc{\Hom}{\mrm{Hom}}
\nc{\Hoch}{\mrm{Hoch}}
\nc{\hsr}{\mrm{H}}
\nc{\hpol}{\mrm{HP}}
\nc{\id}{\mrm{id}}
\nc{\im}{\mrm{im}}
\nc{\inv}{\mrm{inv}}
\nc{\Id}{\mrm{Id}}
\nc{\ID}{\mrm{ID}}
\nc{\Irr}{\mrm{Irr}}
\nc{\incl}{\mrm{incl}}
\nc{\length}{\mrm{length}}
\nc{\NLSW}{\mrm{NLSW}}
\nc{\Lie}{\mrm{Lie}}
\nc{\mchar}{\rm char}
\nc{\mpart}{\mrm{part}}
\nc{\ql}{{\QQ_\ell}}
\nc{\qp}{{\QQ_p}}
\nc{\rank}{\mrm{rank}}
\nc{\rcot}{\mrm{cot}}
\nc{\rdef}{\mrm{def}}
\nc{\rdiv}{{\rm div}}
\nc{\rtf}{{\rm tf}}
\nc{\rtor}{{\rm tor}}
\nc{\res}{\mrm{res}}
\nc{\SL}{\mrm{SL}}
\nc{\Spec}{\mrm{Spec}}
\nc{\tor}{\mrm{tor}}
\nc{\Tr}{\mrm{Tr}}
\nc{\tr}{\mrm{tr}}
\nc{\wt}{\mrm{wt}}


\nc{\bfk}{{\bf k}}
\nc{\bfone}{{\bf 1}}
\nc{\bfzero}{{\bf 0}}
\nc{\detail}{\marginpar{\bf More detail}
	\noindent{\bf Need more detail!}
	\svp}
\nc{\gap}{\marginpar{\bf Incomplete}\noindent{\bf Incomplete!!}
	\svp}
\nc{\FMod}{\mathbf{FMod}}
\nc{\Int}{\mathbf{Int}}
\nc{\Mon}{\mathbf{Mon}}
\nc{\remarks}{\noindent{\bf Remarks: }}
\nc{\Rep}{\mathbf{Rep}}
\nc{\Rings}{\mathbf{Rings}}
\nc{\Sets}{\mathbf{Sets}}
\nc{\Diff}{\mathbf{Diff}}
\nc{\Inte}{\mathbf{Inte}}
\nc{\U}{\mathbf{U}}

\nc{\BA}{{\mathbb A}}   \nc{\CC}{{\mathbb C}}
\nc{\DD}{{\mathbb D}}   \nc{\EE}{{\mathbb E}}
\nc{\FF}{{\mathbb F}}   \nc{\GG}{{\mathbb G}}
\nc{\HH}{{\mathbb H}}   \nc{\LL}{{\mathbb L}}
\nc{\NN}{{\mathbb N}}   \nc{\PP}{{\mathbb P}}
\nc{\QQ}{{\mathbb Q}}   \nc{\RR}{{\mathbb R}}
\nc{\TT}{{\mathbb T}}   \nc{\VV}{{\mathbb V}}
\nc{\ZZ}{{\mathbb Z}}   \nc{\TP}{\widetilde{P}}


\nc{\cala}{{\mathcal A}}    \nc{\calc}{{\mathcal C}}
\nc{\cald}{\mathcal{D}}     \nc{\cale}{{\mathcal E}}
\nc{\calf}{{\mathcal F}}    \nc{\calg}{{\mathcal G}}
\nc{\calh}{{\mathcal H}}    \nc{\cali}{{\mathcal I}}
\nc{\call}{{\mathcal L}}    \nc{\calm}{{\mathcal M}}
\nc{\caln}{{\mathcal N}}    \nc{\calo}{{\mathcal O}}
\nc{\calp}{{\mathcal P}}    \nc{\calr}{{\mathcal R}}

\nc{\cals}{{\mathcal S}}    \nc{\calt}{{\Omega}}
\nc{\calv}{{\mathcal V}}    \nc{\calw}{{\mathcal W}}
\nc{\calx}{{\mathcal X}}

\nc{\fraka}{{\mathfrak a}}
\nc{\frakb}{\mathfrak{b}}
\nc{\frakg}{{\frak g}}
\nc{\frakl}{{\frak l}}
\nc{\fraks}{{\frak s}}
\nc{\frakB}{{\frak B}}
\nc{\frakm}{{\frak m}}
\nc{\frakM}{{\frak M}}
\nc{\frakp}{{\frak p}}
\nc{\frakW}{{\frak W}}
\nc{\frakX}{{\frak X}}
\nc{\frakS}{{\frak S}}
\nc{\frakA}{{\frak A}}
\nc{\frakx}{{\frakx}}

\nc{\ynr}[1]{\textcolor{orange}{\underline{Yunnan:}#1 }}

\nc{\lir}[1]{\textcolor{red}{\underline{Li:}#1 }}

\delete{
	\input cyracc.def

	\newtheorem{theorem}{Theorem}[section]
	\newtheorem{lemma}[theorem]{Lemma}
	\newtheorem{proposition}[theorem]{Proposition}
	\newtheorem{corollary}[theorem]{Corollary}
	
	\theoremstyle{definition}
	\newtheorem{definition}[theorem]{Definition}
	\newtheorem{example}[theorem]{Example}

	\theoremstyle{remark}
	\newtheorem{remark}[theorem]{Remark}

	\allowdisplaybreaks
	
	\numberwithin{equation}{section}
	


}

\begin{document}

\title[ ]{Fundamental theorem of transposed Poisson $(A,H)$-Hopf modules}

\author{Yan Ning}
\address{School of Mathematics and Big Data, Jining University,
Qufu 273155, China}
\email{}

\author{Daowei Lu}
\address{School of Mathematics and Big Data, Jining University,
Qufu 273155, China}
\email{ludaowei620@126.com}

\author{Dingguo Wang}
\address{Department of General Education, Shandong Xiehe University, Jinan 250109, China}
\email{}


\begin{abstract}
Transposed Poisson algebra was introduced as a dual notion of the Poisson algebra by switching the roles played by the commutative associative operation and Lie operation in the Leibniz rule defining the Poisson algebra. Let $H$ be a Hopf algebra with a bijective antipode and $A$ an $H$-comodule transposed Poisson algebra. Assume that there exists an $H$-colinear map which is also an algebra map from $H$ to the transposed Poisson center of $A$. In this paper we generalize the fundamental theorem of $(A, H)$-Hopf modules to transposed Poisson $(A, H)$-Hopf modules and deduce relative projectivity in the category of transposed Poisson $(A, H)$-Hopf modules.
\end{abstract}

\keywords{Transposed Poisson algebra, Hopf algebra, Transposed Poisson Hopf module,  Fundamental theorem. \\ 
\qquad 2020 Mathematics Subject Classification. 17B63, 16T05.}

\maketitle


\allowdisplaybreaks

\section*{Introduction} 

A Poisson algebra is a commutative associative unitary algebra $A$ endowed with a bilinear map $\{\cdot,\cdot\}:A\times A\rightarrow A$, providing $A$ with a Lie algebra structure and for all $a,a',a''\in A$ satisfying the {\bf Leibniz rule}
\begin{equation*}
\{a,a'a''\}=a'\{a,a''\}+\{a,a'\}a''.
\end{equation*}

Poisson algebras play a significant role in various branches of mathematics and physics. They are fundamental to integrable systems, Poisson geometry (including Poisson brackets and Poisson manifolds)\cite{BL,BV}. Additionally, they find applications in vertex algebras\cite{FB}, quantization theory\cite{CP,Hu}, deformation quantization\cite{Dz}, as well as in classical mechanics\cite{Ar}, Hamiltonian mechanics\cite{Od}, and quantum mechanics\cite{Di}. Furthermore, Poisson algebras are relevant in quantum field theory, Lie theory\cite{MR}, and operads\cite{GR}.

A transposed Poisson algebra\cite{BBGW} is a commutative associative unitary algebra $A$ endowed with a bilinear map $\{\cdot,\cdot\}:A\times A\rightarrow A$, such that $A$ with a Lie algebra structure and for all $a,a',a''\in A$ satisfying the {\bf transposed Leibniz rule}
\begin{equation*}
2a\{a',a''\}=\{a a',a''\}+\{a',a a''\}.
\end{equation*}

The difference of Poisson algebra and transposed Poisson algebra mainly lies in the fact that the roles played by the two binary operations $\cdot$ and $\{,\}$ in the Leibniz rule are switched.

While both the Poisson algebra and transposed Poisson algebra are built from a commutative associative operation and a Lie operation, the intersection of the compatibility conditions of both algebraic structures is trivial,  that is, the structures of transposed Poisson algebras and Poisson algebras are incompatible in this sense of \cite{BBGW}.

It is worthwhile to point out that transposed Poisson algebra has close relationship with other important algebraic structures. For example, taking the commutator in a Novikov-Poisson algebra or in a pre-Lie Poisson algebra gives rise to a transposed Poisson algebra; a derivation which is also a Lie derivation of a transposed Poisson algebra could produce a 3-Lie algebra; a transposed Poisson algebra with an involutive endomorphism of the commutative algebra which is at the same time an anti-endomorphism of the Lie algebra also gives rise to a 3-Lie algebra.

Let $H$ be a Hopf algebra, and $A$ a right $H$-comodule algebra. Doi \cite{D} established the fundamental theorem of relative Hopf modules, which states that if there exists a right $H$-comodule map $\phi:H\rightarrow A$ which is an algebra map, then for any relative right $(A,H)$-Hopf module $M$, the following isomorphism of relative right $(A,H)$-Hopf modules is obtained
$$A\ot_{A^{coH}}M\rightarrow M,\ a\ot m\mapsto a\cdot m.$$
This isomorphism plays an essential role in the theory of Hopf algebras, such as the integral theory, Nichols-Zoller theorem, Galois theory and so on \cite{Mon}.

Let $H$ be a Hopf algebra and $A$ a Poisson algebra such that $A$ is a $H$-comodule Poisson algebra. In~\cite{Gu} Gu$\acute{e}$d$\acute{e}$non established the fundamental theorem of Poisson $(A,H)$-Hopf modules, which generalized the fundamental isomorphism to the case of Poisson algebra. On account of the connections between Poisson algebra and transposed Poisson algebra, it is very natural and interesting to consider the fundamental theorem in the context of transposed Poisson algebra.

The paper is organized as follows. In section 1, we recall basic definitions on transposed Poisson algebra and introduce the notions of the center of transposed Poisson algebra, $H$-comodule transposed Poisson algebra and transposed Poisson Hopf modules. In section 2, we deduce the Maschke type theorem in the category of transposed Poisson Hopf modules, and show that under some conditions any transposed Poisson Hopf module which is injective as a Lie module is also an injective transposed Poisson Hopf modules. In section 3, we give the fundamental theorem of transposed Poisson $(A,H)$-Hopf modules and deduce relative projectivity  in the category of transposed Poisson $(A,H)$-Hopf modules.

\vspace{2mm}

\noindent
{\bf Convention.}
In this paper, let $\bk$ be a fixed field of characteristic 0. All the objects under discussion, including vector spaces, algebras and tensor products, are taken over $\bk$ otherwise specified.
For any coalgebra $(C,\Delta,\vep)$, we compress the Sweedler notation of the comultiplication $\Delta$ as $$\Delta(c)=c_1\otimes c_2$$ for simplicity.

\section{Preliminaries}
\def\theequation{1.\arabic{equation}}
\setcounter{equation} {0}

A Hopf algebra is an algebra $(H,m_H,1_H)$ and coalgebra $(H,\Delta_H,\varepsilon_H)$ that possesses an antipode $S_{H}:H\rightarrow H$ satisfying the defining relations
\begin{align*}
&(\Delta_{H}\otimes id_{H})\circ\Delta_{H}=(id_{H}\otimes\Delta_{H}) \circ\Delta_{H},\\
&m_{H}\circ[(S_{H}\otimes id_{H})\circ\Delta_{H}]=m_{H}\circ[(id_{H}\otimes S_{H})\circ\Delta_{H}]=\varepsilon_{H},\\
&(\varepsilon_{H}\otimes id_{H})\circ\Delta_{H}=(id_{H}\otimes\varepsilon_{H})\circ\Delta_{H}=id_{H}.
\end{align*}
For background on Hopf algebras and coactions of Hopf algebras on rings, we refer to \cite{Mon}. We will use Sweedler-Heyneman notation, writing:
$$\Delta_{H}(h)=h_{1}\otimes h_{2},\text{ for all }h\in H.$$

A right $H$-comodule is a vector space $M$ with structure map $\rho_{M}:M\rightarrow M\ot H,\ m\mapsto m_{(0)}\ot m_{(1)}$ satisfying
$$(id_M\ot \Delta_{H})\circ\rho_{M}=(\rho_{M}\ot id_N)\circ\rho_{M},\quad (id_M\ot \varepsilon_{H})\circ\rho_{M}=id_M.$$

The vector subspace
$$M^{coH}=\{m\in M|\rho_{M}(m)=m\otimes 1_{H}\}.$$
of $M$ is called the subspace of $H$-coinvariants of $M$.

An algebra $A$ is an $H$-comodule algebra if $A$ is an $H$-comodule satisfying
\begin{align*}
&(aa')_{(0)} \otimes (aa')_{(1)} = a_{(0)} a'_{(0)}\otimes a_{(1)} a'_{(1)}, \\ 
&(1_A)_{(0)}\otimes (1_A)_{(1)} = 1_A \otimes 1_H,
\end{align*}
for all $a,a'\in A$.

If $A$ is an $H$-comodule algebra, then the subspace $A^{coH}$ is a subalgebra of $A$.

\begin{definition}\cite{BBGW} 
A transposed Poisson algebra is a triple $(A,\cdot, \{, \})$, where $(A, \cdot)$ is a commutative associative algebra and $(A, \{, \})$ is a Lie algebra, satisfying the transposed Leibniz identity
\begin{equation}
2a\{b,c\}=\{ab,c\}+\{b,ac\},\label{1a}
\end{equation}
for all $a,b,c\in A$.
\end{definition}
A subspace $B$ of $A$ is called a transposed Poisson subalgebra of $A$ if $B$ is both a subalgebra and Lie subalgebra of $A$.

The transposed Poisson center $A^A$ of $A$ is given as follows
$$A^A=\{b\in A|b\{a,a'\}=\{ba,a'\}, \forall a,a'\in A\}.$$

\begin{lemma}\label{Lem:1a}
$A^A$ is a transposed Poisson subalgebra of $A$.
\end{lemma}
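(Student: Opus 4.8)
The plan is to verify the two closure conditions separately, the second being the genuine obstacle. Closure of $A^A$ under the commutative product is immediate: if $b,b'\in A^A$ and $a,a'\in A$, then using associativity and commutativity together with the defining property of $A^A$ twice,
\begin{equation*}
\{(bb')a,a'\}=\{b(b'a),a'\}=b\{b'a,a'\}=b(b'\{a,a'\})=(bb')\{a,a'\},
\end{equation*}
so $bb'\in A^A$, and $1_A\in A^A$ trivially. Since the defining relation $b\{a,a'\}=\{ba,a'\}$ is linear in $b$, the set $A^A$ is visibly a linear subspace, so it remains only to show it is a Lie subalgebra, i.e.\ that $\{b,b'\}\in A^A$ whenever $b,b'\in A^A$. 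The hard part will be exactly this, because $\{b,b'\}$ carries no obvious factorization letting one pull it through a bracket the way a central \emph{element} can be pulled through.

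To handle the Lie part I would first extract a structural identity from the transposed Leibniz rule \eqref{1a}. Writing \eqref{1a} for the three cyclic permutations of a triple $a,b,c$ and adding them, the six right-hand terms cancel in pairs (using commutativity of $\cdot$ and antisymmetry of $\{,\}$, e.g.\ $\{ab,c\}$ against $\{c,ba\}$), which yields the cyclic identity
\begin{equation*}
a\{b,c\}+b\{c,a\}+c\{a,b\}=0\qquad(a,b,c\in A).
\end{equation*}
Specializing $c=1_A$ and setting $D(x):=\{x,1_A\}$ turns this into $\{a,b\}=bD(a)-aD(b)$, so the whole bracket is controlled by the single linear map $D$. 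Moreover, putting $c=1_A$ in \eqref{1a} gives $\{ab,1_A\}=aD(b)+bD(a)$, that is, $D$ is a derivation of the commutative associative algebra $(A,\cdot)$.

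With these two facts the center becomes transparent. Substituting $\{a,b\}=bD(a)-aD(b)$ into the defining equation $\{ba,a'\}=b\{a,a'\}$ and simplifying with the derivation property, one finds $\{ba,a'\}-b\{a,a'\}=aa'D(b)$, so $b\in A^A$ is equivalent to $aa'D(b)=0$ for all $a,a'\in A$; taking $a=a'=1_A$ shows this is equivalent to $D(b)=0$, whence $A^A=\ker D$. Closure under the bracket is now automatic: for $b,b'\in\ker D$ the bracket formula gives $\{b,b'\}=b'D(b)-bD(b')=0\in\ker D$, so $A^A$ is in fact an abelian Lie subalgebra (and closure under the product, already noted, also follows at once from the derivation property together with $D(1_A)=0$). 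I expect the only subtle points to be the derivation of the cyclic identity and the identification $A^A=\ker D$; once the map $D$ is isolated, both closure statements reduce to one-line computations. Note that this argument uses the unit $1_A$, as provided by the standing assumption that $A$ is unitary.
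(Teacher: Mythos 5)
Your proof is correct, and for the genuinely hard half --- closure under the bracket --- it takes a different route from the paper's (the product-closure computation is essentially identical in both). The paper argues directly: for $b,b'\in A^A$ it first notes that a central element slides into either slot of the bracket, then computes via \eqref{1a} that $2a\{b,b'\}=\{ab,b'\}+\{b,ab'\}=\{a,bb'\}-\{a,bb'\}=0$, and sets $a=1_A$ to get $\{b,b'\}=0$; four lines, no auxiliary machinery. You instead establish two structural facts valid in any unital transposed Poisson algebra over a field of characteristic $\neq 2$: the cyclic identity $a\{b,c\}+b\{c,a\}+c\{a,b\}=0$, and, specializing $c=1_A$, that the whole bracket is governed by the single derivation $D=\{\cdot,1_A\}$ through $\{a,b\}=bD(a)-aD(b)$, with $A^A=\ker D$. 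Both arguments land on the same, stronger, conclusion (the bracket vanishes identically on $A^A$), and both rely on unitality and on dividing by $2$, which the paper's characteristic-zero convention permits. Your detour is longer but buys real structure: it recovers the known description of unital transposed Poisson algebras as commutative algebras equipped with a derivation, and it identifies the transposed Poisson center concretely as $\ker D$, information the paper's computation never exposes (and from which, e.g., the triviality of the Lie action in Lemma \ref{Lem:1c} is also transparent). One small presentational point: your claim that putting $c=1_A$ in \eqref{1a} ``gives'' $D(ab)=aD(b)+bD(a)$ in fact also uses the bracket formula $\{b,a\}=aD(b)-bD(a)$, since the substitution alone yields only $2aD(b)=D(ab)+\{b,a\}$; the derivation property is a consequence of the two identities together. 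This is a one-line fix, not a gap.
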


\begin{proof}
For all $b,b'\in A^A,a,a'\in A$, we have
$$\{ba,a'\}=b\{a,a'\}=-b\{a',a\}=-\{ba',a\}=\{a,ba'\},$$
and 
$$bb'\{a,a'\}=b\{b'a,a'\}=\{bb'a,a'\}.$$
Since $1_A\in A^A$, $A^A$ is a subalgebra of $A$. Moreover
\begin{align*}
2a\{b,b'\}&=\{ab,b'\}+\{b,ab'\}=b\{a,b'\}+\{b,a\}b'\\
&=\{a,bb'\}-\{a,b\}b'=\{a,bb'\}-\{a,b b'\}\\
&=0,
\end{align*}
therefore we obtain that $\{b,b'\}=0$ by setting $a=1_A$, that is, the Poisson bracket on $A^A$ is trivial. Thus $A^A$ is a transposed Poisson subalgebra of $A$.
\end{proof}

\begin{definition}
Let $H$ be a Hopf algebra and $A$ a transposed Poisson algebra. We call $A$ a right $H$-comodule transposed Poisson algebra if $A$ is a right $H$-comodule algebra and satisfies the following relation
\begin{equation}
\{a,a'\}_{(0)}\ot\{a,a'\}_{(1)}=\{a_{(0)},a'_{(0)}\}\ot a_{(1)}a'_{(1)},\label{1d}
\end{equation}
for all $a,a'\in A$.
\end{definition}

\begin{lemma}\label{Lem:1b}
Let $A$ be a right $H$-comodule transposed Poisson algebra. Then $A^{coH}$ is an $H$-subcomodule transposed Poisson algebra of $A$.
\end{lemma}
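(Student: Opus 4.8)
The plan is to check three things: that $A^{coH}$ is closed under the commutative associative product, that it is closed under the Lie bracket, and that it is an $H$-subcomodule (necessarily with the trivial coaction $b\mapsto b\otimes 1_H$). The first of these is already recorded in the excerpt: since $A$ is an $H$-comodule algebra, $A^{coH}$ is a subalgebra of $(A,\cdot)$, and in particular $1_A\in A^{coH}$. Moreover the transposed Leibniz identity \eqref{1a} holds for \emph{all} elements of $A$, hence a fortiori for elements of $A^{coH}$; so once closure under the bracket is established, the subspace automatically inherits the full transposed Poisson structure and no separate verification of the identity is needed.

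The heart of the argument is closure under the bracket, and this is the one place where the comodule compatibility \eqref{1d} does genuine work. First I would take $b,b'\in A^{coH}$, so that by definition of the coinvariants $b_{(0)}\otimes b_{(1)}=b\otimes 1_H$ and $b'_{(0)}\otimes b'_{(1)}=b'\otimes 1_H$. Applying \eqref{1d} to the pair $(b,b')$ then gives
\begin{align*}
\{b,b'\}_{(0)}\otimes\{b,b'\}_{(1)}
&=\{b_{(0)},b'_{(0)}\}\otimes b_{(1)}b'_{(1)}\\
&=\{b,b'\}\otimes 1_H1_H=\{b,b'\}\otimes 1_H,
\end{align*}
where the middle equality substitutes the coinvariance of $b$ and $b'$ and the last uses $1_H1_H=1_H$. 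This is precisely the assertion that $\{b,b'\}$ is a coinvariant, i.e. $\{b,b'\}\in A^{coH}$, so $A^{coH}$ is a Lie subalgebra of $(A,\{,\})$.

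Finally, the $H$-subcomodule claim is immediate: for every $b\in A^{coH}$ we have $\rho_A(b)=b\otimes 1_H\in A^{coH}\otimes H$, so the coaction restricts to $A^{coH}$. Combining the three points, $A^{coH}$ is simultaneously a subalgebra, a Lie subalgebra, and an $H$-subcomodule, and it inherits \eqref{1a} and \eqref{1d} by restriction; hence it is an $H$-subcomodule transposed Poisson algebra of $A$. I do not expect any serious obstacle here: every step except the bracket computation is merely the restriction of structure already present on $A$, and the one substantive point is the use of the compatibility \eqref{1d} relating the Lie bracket to the coaction, which is exactly what forces $\{b,b'\}$ back into the coinvariants.
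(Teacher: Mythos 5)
Your proof is correct and is exactly the ``direct'' verification that the paper leaves to the reader (its proof consists only of the sentence ``The proof is direct''): closure under the product comes from $A$ being an $H$-comodule algebra, closure under the bracket follows by substituting the coinvariance of $b,b'$ into the compatibility \eqref{1d}, and the subcomodule property is immediate since coinvariants have trivial coaction. Nothing is missing, and the one substantive step --- using \eqref{1d} to force $\{b,b'\}$ back into $A^{coH}$ --- is identified correctly.
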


\begin{proof}
The proof is direct.
\end{proof}

If $A$ is a right $H$-comodule transposed Poisson algebra, set
$$A^{AcoH}=\{a\in A|a\in A^A\ \hbox{and}\ a\in A^{coH}\}.$$

\begin{definition}\cite{LS}
Let $A$ be a transposed Poisson algebra. A vector space $M$ is called a transposed Poisson $A$-module if $(M,\cdot)$ is an $A$-module and $(M,\di)$ a Lie $A$-module satisfying 
\begin{align}
&2\{a,b\}\cdot m=a\di(b\cdot m)-b\di(a\cdot m),\label{1b}\\
&2a\cdot(b\di m)=ab\di m+b\di(a\cdot m),\label{1c}
\end{align}
for all $a,b\in A,m\in M$.
\end{definition}

\begin{definition}
Let $M$ be a transposed Poisson $A$-module. Then the Lie action and algebra action are associative if for all $a\in A,b\in A^A,m\in M$, 
$$(ab)\di m=b\cdot(a\di m).$$
\end{definition}

\begin{remark}
Note that a transposed Poisson algebra $A$ is a transposed Poisson $A$-module with the actions given by the multiplications and Poisson bracket respectively. And the two actions are  associative.
\end{remark}

Let $M$ be a transposed Poisson $A$-module, define the Lie $A$-invariant elements of $M$ by
$$M^A=\{m\in M|\{a, a'\}\cdot m=a\di(a'\cdot m), \forall a,a'\in A\}.$$

\begin{lemma}\label{Lem:1c}
Let $M$ be a transposed Poisson $A$-module, then for all $b\in A^A,m\in M^A$, we have $b\di m=0$.
\end{lemma}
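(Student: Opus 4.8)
The plan is to reduce the Lie action $b\di m$ to an algebra action by exploiting the defining relation of $M^A$, and then to invoke the triviality of the bracket on the center $A^A$ established in Lemma~\ref{Lem:1a}.

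First I would specialize the membership condition for $m\in M^A$. By definition $\{a,a'\}\cdot m = a\di(a'\cdot m)$ for all $a,a'\in A$; setting $a'=1_A$ and using that the $A$-module structure on $M$ is unital ($1_A\cdot m = m$) yields $a\di m = \{a,1_A\}\cdot m$ for every $a\in A$. Thus on elements of $M^A$ the Lie action is completely controlled by the algebra action together with the bracket against the unit.

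Next I would put $a=b$ with $b\in A^A$. Since $1_A\in A^A$ and, by Lemma~\ref{Lem:1a}, the Lie bracket restricted to the center $A^A$ is identically zero, we get $\{b,1_A\}=0$. Substituting this into the previous identity gives $b\di m = \{b,1_A\}\cdot m = 0$, which is the claim.

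The argument is short and the only real content is the specialization $a'=1_A$, which converts the Lie action into a bracket against $1_A$; no genuine obstacle arises beyond checking that the module action is unital and that $1_A$ lies in the center so that Lemma~\ref{Lem:1a} applies. An alternative route through the compatibility axioms \eqref{1b} and \eqref{1c} seems possible but more roundabout, so I would favor the direct specialization above.
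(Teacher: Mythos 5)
Your proof is correct, and it is in fact slightly leaner than the paper's. The paper argues in two steps: it first sets $a=1_A$ in the module axiom \eqref{1b}, which (together with $\{1_A,b\}=0$ from Lemma~\ref{Lem:1a}) gives $b\di m = 1_A\di(b\cdot m)$, and then invokes the $M^A$ condition with $a=1_A$, $a'=b$ to rewrite $1_A\di(b\cdot m)$ as $\{1_A,b\}\cdot m=0$. You instead specialize the $M^A$ condition directly at $a'=1_A$, which yields $a\di m=\{a,1_A\}\cdot m$ for all $a\in A$ in a single step, and then conclude by taking $a=b$ and applying Lemma~\ref{Lem:1a}; axiom \eqref{1b} is never needed. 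Both arguments rest on the same two ingredients — unitality of the $A$-action on $M$ and the vanishing of the bracket on $A^A$ (which contains $1_A$, so Lemma~\ref{Lem:1a} applies) — so the proofs are close in spirit, but your choice to specialize the defining identity of $M^A$ rather than the module axiom makes the reduction more direct and uses strictly fewer hypotheses about the transposed Poisson module structure.
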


\begin{proof}
In the identity (\ref{1b}), let $a=1_A$, we get $1_A\di (b\cdot m)=b\di m$ by Lemma \ref{Lem:1a}, which implies that $b\di m=\{1_A,b\}\cdot m=0$.
\end{proof}

\begin{definition}\cite{Gu}
Let $A$ be an $H$-comodule algebra. A vector space $M$ is an $(A, H)$-Hopf module if $M$ is a left $A$ -module and a right $H$-comodule such that
$$(a\cdot m)_{(0)}\ot (a\cdot m)_{(1)}=a_{(0)}\cdot m_{(0)}\ot a_{(1)}m_{(1)},$$
for all $a\in A,m\in M$.
\end{definition}

\begin{definition}
Let $A$ be a right $H$-comodule transposed Poisson algebra. A vector space $M$ is called a transposed Poisson $(A,H)$-Hopf module if $M$ is a transposed Poisson $A$-module and an $(A,H)$-Hopf module such that for all $a\in A,m\in M$,
\begin{equation}
(a\di m)_{(0)}\ot (a\di m)_{(1)}=a_{(0)}\di m_{(0)}\ot a_{(1)}m_{(1)}.\label{1e}
\end{equation}
\end{definition}

A transposed Poisson $(A, H)$-Hopf module homomorphism between two transposed Poisson $(A, H)$-Hopf modules is an $A$-linear map, a Lie $A$-linear map and an $H$-colinear map. We denote by $_{\mathcal{P}A}\mathcal{M}^H$ the category of transposed Poisson $(A, H)$-Hopf modules with their homomorphisms. For $M,N\in\! _{\mathcal{P}A}\mathcal{M}^H$, we denote by $_{\mathcal{P}A}$Hom$^H(M,N)$ the vector space of transposed Poisson $(A, H)$-Hopf module homomorphisms from $M$ to $N$.

\begin{lemma}\label{Lem:1d}
Let $H$ be a Hopf algebra, $A$ a right $H$-comodule transposed Poisson algebra, and $M$ a transposed Poisson $(A,H)$-Hopf module. Then
\begin{itemize}
  \item [(1)] $M^A$ is an $H$-subcomodule of $M$.
  \item [(2)] $A^A$ is an $H$-subcomodule transposed Poisson algebra of $A$, and $A^{AcoH}$ is a transposed Poisson subalgebra of $A^A$.
  \item [(3)] $M^{AcoH}$ is a transposed Poisson $A^{AcoH}$-submodule of $M$.
\end{itemize}
\end{lemma}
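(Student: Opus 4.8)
The plan is to verify each of the three assertions by unwinding the relevant definitions and checking the defining closure conditions directly, using the compatibility axioms \eqref{1d} and \eqref{1e} together with the two lemmas on $A^A$ and $M^A$.

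\textbf{Part (1).} To show $M^A$ is an $H$-subcomodule, I first need that $M^A$ is a $\bk$-subspace (immediate from linearity of $\di$, $\cdot$, and the bracket in each slot). The substantive claim is that $\rho_M(M^A)\subseteq M^A\ot H$. I would take $m\in M^A$ and compute $\rho_M(m)=m_{(0)}\ot m_{(1)}$, and show each component $m_{(0)}$ lies in $M^A$, i.e.\ that $\{a,a'\}\cdot m_{(0)}=a\di(a'\cdot m_{(0)})$ for all $a,a'\in A$. The natural strategy is to apply $\rho_M$ to the defining identity $\{a,a'\}\cdot m=a\di(a'\cdot m)$ and use the Hopf-module compatibility $(a\cdot m)_{(0)}\ot(a\cdot m)_{(1)}=a_{(0)}\cdot m_{(0)}\ot a_{(1)}m_{(1)}$, the Lie version \eqref{1e}, and the bracket-comodule rule \eqref{1d}, obtaining an equality in $M\ot H$ of the form $\{a_{(0)},a'_{(0)}\}\cdot m_{(0)}\ot a_{(1)}a'_{(1)}m_{(1)}=a_{(0)}\di(a'_{(0)}\cdot m_{(0)})\ot a_{(1)}a'_{(1)}m_{(1)}$. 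The challenge is then to strip off the $H$-factor to conclude the componentwise identity; the clean way is to restrict to $a,a'\in A^{coH}$ — for which $a_{(0)}\ot a_{(1)}=a\ot 1_H$ — so the tensor identity collapses to $\{a,a'\}\cdot m_{(0)}\ot m_{(1)}=a\di(a'\cdot m_{(0)})\ot m_{(1)}$, and applying $\id\ot\vep$ gives exactly $m_{(0)}\in M^A$ tested against $A^{coH}$. I expect this localization to $A^{coH}$, or an equivalent coaction-coassociativity argument, to be the main obstacle, and I would check carefully that testing against $A^{coH}$ suffices (or else argue directly using coassociativity of $\rho_M$).

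\textbf{Part (2).} That $A^A$ is an $H$-subcomodule transposed Poisson algebra amounts to two things. First, $A^A$ is a transposed Poisson subalgebra — this is exactly Lemma~\ref{Lem:1a}, so I may quote it. Second, $A^A$ is closed under the coaction, i.e.\ $\rho_A(A^A)\subseteq A^A\ot H$; this is the special case of Part~(1) applied to the canonical transposed Poisson $(A,H)$-Hopf module $M=A$ (the Remark guarantees $A$ is a transposed Poisson $A$-module, and $A$ is a comodule algebra satisfying \eqref{1d}), since for the regular module $M^A=A^A$. For the final clause, $A^{AcoH}=A^A\cap A^{coH}$ is an intersection of a transposed Poisson subalgebra with the coinvariant subalgebra, hence a transposed Poisson subalgebra; I would note it is closed under $\cdot$ and $\{,\}$ (the bracket in fact vanishing on $A^A$ by Lemma~\ref{Lem:1a}) and contains $1_A$.

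\textbf{Part (3).} Here I must show $M^{AcoH}:=M^A\cap M^{coH}$ is a transposed Poisson $A^{AcoH}$-submodule. By Part~(1), $M^A$ is an $H$-subcomodule, and $M^{coH}$ is visibly an $H$-subcomodule, so $M^{AcoH}$ is a subspace stable under $\rho_M$. It remains to verify stability under the two $A^{AcoH}$-actions: for $b\in A^{AcoH}$ and $m\in M^{AcoH}$, I need $b\cdot m\in M^{AcoH}$ and $b\di m\in M^{AcoH}$. The Lie action is immediate from Lemma~\ref{Lem:1c}, which gives $b\di m=0$ since $b\in A^A$ and $m\in M^A$; thus $0\in M^{AcoH}$ trivially. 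For the algebra action, coinvariance of $b\cdot m$ follows from the $(A,H)$-Hopf-module compatibility with $b\in A^{coH},m\in M^{coH}$, giving $(b\cdot m)_{(0)}\ot(b\cdot m)_{(1)}=b\cdot m\ot 1_H$; and $b\cdot m\in M^A$ should follow by a direct computation showing $\{a,a'\}\cdot(b\cdot m)=a\di(a'\cdot(b\cdot m))$, using $b\in A^A$ to move $b$ past the bracket and module axioms \eqref{1b}--\eqref{1c} together with $m\in M^A$. I anticipate that this last verification — confirming $b\cdot m\in M^A$ — will require the associativity-of-actions hypothesis $(ab)\di m=b\cdot(a\di m)$ and careful bookkeeping with the module identities, and I would treat it as the one genuinely computational point.
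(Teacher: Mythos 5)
Your parts (2) and (3) are correct and follow the paper's own route: (2) is Lemma \ref{Lem:1a} plus part (1) applied to the regular module $M=A$ (whose Lie invariants are exactly the transposed Poisson center), and (3) combines Lemma \ref{Lem:1c} (the Lie action of $A^{AcoH}$ on $M^{AcoH}$ vanishes), coinvariance of $b\cdot m$ from the Hopf-module axiom, and the chain
\[
\{a,a'\}\cdot(b\cdot m)=(\{a,a'\}b)\cdot m=\{a,a'b\}\cdot m=a\di((a'b)\cdot m)=a\di(a'\cdot(b\cdot m)),
\]
which needs only $b\in A^A$, $m\in M^A$ and ordinary module associativity; the extra ``associative actions'' hypothesis you anticipated is not required here.

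The genuine gap is in part (1), exactly at the step you flagged as the main obstacle. Testing against $a,a'\in A^{coH}$ cannot suffice: membership $m_{(0)}\in M^A$ demands $\{a,a'\}\cdot m_{(0)}=a\di(a'\cdot m_{(0)})$ for \emph{all} $a,a'\in A$, and there is no way to bootstrap from coinvariant test elements to arbitrary ones. (Moreover, applying $\id\ot\vep$ as you propose does not isolate components; it merely collapses the tensor identity back to the hypothesis $\{a,a'\}\cdot m=a\di(a'\cdot m)$. Componentwise conclusions come instead from writing $\rho_M(m)=\sum_i m_i\ot h_i$ with the $h_i$ linearly independent.) The missing idea --- and the reason the paper assumes the antipode is bijective --- is an antipode cancellation. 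Since $m\in M^A$, the defining identity already holds for the coacted legs: $\{a_{(0)},a'_{(0)}\}\cdot m=a_{(0)}\di(a'_{(0)}\cdot m)$. Apply $\rho_M$ to both sides of \emph{this} identity, expand using the Hopf-module axiom, \eqref{1d} and \eqref{1e}, split $a_{(1)}$ and $a'_{(1)}$ by coassociativity, and multiply the $H$-leg from the left by $S^{-1}(a_{(1)}a'_{(1)})$. The relation $S^{-1}(h_2)h_1=\vep(h)1_H$ kills the unwanted factors and leaves
\[
\{a,a'\}\cdot m_{(0)}\ot m_{(1)}=a\di(a'\cdot m_{(0)})\ot m_{(1)}
\]
for arbitrary $a,a'\in A$; linear independence of the second legs then gives $m_i\in M^A$ for every component, i.e. $\rho_M(M^A)\subseteq M^A\ot H$. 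Without this (or an equivalent) cancellation your part (1) fails, and with it the comodule-closure claim in your part (2), which depends on it.
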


\begin{proof}
(1) For all $a,a'\in A,m\in M^A,$ on one hand
\begin{align*}
&(1\ot S^{-1}(a_{(1)}a'_{(1)}))[(\{a_{(0)},a'_{(0)}\}\cdot m)_{(0)}\ot(\{a_{(0)},a'_{(0)}\}\cdot m)_{(1)}]\\
&=(1\ot S^{-1}(a_{(1)}a'_{(1)}))[\{a_{(0)},a'_{(0)}\}_{(0)}\cdot m_{(0)}\ot\{a_{(0)},a'_{(0)}\}_{(1)}m_{(1)}]\\
&=(1\ot S^{-1}(a_{(1)2}a'_{(1)2}))[\{a_{(0)},a'_{(0)}\}\cdot m_{(0)}\ot a_{(1)1}a'_{(1)1}m_{(1)}]\\
&=\{a,a'\}\cdot m_{(0)}\ot m_{(1)},
\end{align*}
and on the other hand
\begin{align*}
&(1\ot S^{-1}(a_{(1)}a'_{(1)}))[(\{a_{(0)},a'_{(0)}\}\cdot m)_{(0)}\ot(\{a_{(0)},a'_{(0)}\}\cdot m)_{(1)}]\\
&=(1\ot S^{-1}(a_{(1)}a'_{(1)}))[(a_{(0)}\di(a'_{(0)}\cdot m))_{(0)}\ot(a_{(0)}\di(a'_{(0)}\cdot m))_{(1)}]\\
&=(1\ot S^{-1}(a_{(1)2}a'_{(1)2}))[(a_{(0)}\di(a'_{(0)}\cdot m_{(0)}))\ot a_{(1)1}a'_{(1)1} m_{(1)}]\\
&=(a\di(a'\cdot m_{(0)}))\ot m_{(1)},
\end{align*}
hence
$$(\{a,a'\}\cdot m_{(0)}-a\di(a'\cdot m_{(0)}))\ot m_{(1)}=0.$$ 

By a similar argument as in \cite{Gu}, we have $\{a,a'\}\cdot m_{(0)}-a\di(a'\cdot m_{(0)})=0$ for each summand $m_{(0)}$.  So $M^A$ is an $H$-subcomodule of $M$.

(2) By (1) $A^A$ is an $H$-subcomodule of $A$. Since $A^A$ is also a subalgebra and Lie subalgebra of $A$, $A^A$ is an $H$-subcomodule transposed Poisson algebra of $A$. Similar argument implies that $A^{AcoH}$ is a transposed Poisson subalgebra of $A^A$.

(3) Firstly $M$ is a transposed Poisson $A^{AcoH}$-module of $M$ since $A^{AcoH}$ is a transposed Poisson subalgebra of $A$. For all $a,a'\in A,b\in A^{AcoH},m\in M^{AcoH}$, we have 
$$\{a,a'\}\cdot(b\cdot m)=\{a,a'\}b\cdot m=\{a,a'b\}\cdot m=a\di(a'\cdot(b\cdot m)),$$
that is, $M^{AcoH}$ is a $A^{AcoH}$-submodule of $M$. And $M^{AcoH}$ is a trivial Lie $A^{AcoH}$-submodule of $M$ by Lemma \ref{Lem:1c}. The proof is completed.
\end{proof}

\vspace{2mm}

\section{Maschke type theorem in the category $_{\mathcal{P}A}\mathcal{M}^H$}
\def\theequation{2.\arabic{equation}}
\setcounter{equation} {0}

In what follows, we will say that a vector space $M$ is an $(\underline{A},H)$-comodule if $M$ is a Lie $A$-module, a right $H$-comodule and the relation (\ref{1e}) is satisfied. We denote by $_{\underline{A}}\mathcal{M}$ the category of Lie $A$-modules with Lie $A$-linear maps, and by $_{\underline{A}}\mathcal{M}^H$ the category of $(\underline{A},H)$-comodule with Lie $A$-linear and $H$-colinear maps. For $M,N\in\! _{\underline{A}}\mathcal{M}^H$, we denote by $_{\underline{A}}$Hom$^H(M,N)$ the vector space of Lie $A$-linear and $H$-colinear maps from $M$ to $N$.

\begin{lemma}\label{Lem:2a}
\begin{itemize}
  \item [(1)] Let $N$ be a Lie $A$-module. Then $N\ot H$ is an $(\underline{A},H)$-comodule with the Lie $A$-action and $H$-coaction given by
\begin{align*}
&a\di (n\ot h)=a_{(0)}\di n\ot a_{(1)}h,\\
&(n\ot h)_{(0)}\ot(n\ot h)_{(1)}=n\ot h_1\ot h_2,
\end{align*}
for all $a\in A, n\ot h\in N\ot H$.
  \item [(2)] Furthermore, if $H$ is commutative and $N$ is a transposed Poisson $A$-module, then $N\ot H$ is a transposed Poisson $(A, H)$-Hopf module with the $A$-action given by
$$a\cdot (n\ot h)= a_{(0)}\cdot n\ot a_{(1)}h,$$
for all $a\in A, n\ot h\in N\ot H$.
\end{itemize}
\end{lemma}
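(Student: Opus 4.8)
The plan is to verify, for each part, that the proposed structure maps satisfy all the defining axioms listed in the preliminaries. For part (1), I would first check that $N\otimes H$ is a genuine Lie $A$-module under the given action $a\di(n\otimes h)=a_{(0)}\di n\otimes a_{(1)}h$; the Lie bracket compatibility $\{a,a'\}\di(n\otimes h)=a\di(a'\di(n\otimes h))-a'\di(a\di(n\otimes h))$ should follow from expanding both sides using the comodule-algebra relation \eqref{1d} together with the fact that $N$ is already a Lie $A$-module. Next I would confirm that $N\otimes H$ is a right $H$-comodule under $\rho(n\otimes h)=n\otimes h_1\otimes h_2$, which reduces to the coassociativity and counit axioms for the comultiplication $\Delta_H$ on the $H$-factor. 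The essential compatibility to establish is the coaction identity \eqref{1e}, i.e.\ $(a\di(n\otimes h))_{(0)}\otimes(a\di(n\otimes h))_{(1)}=a_{(0)}\di(n\otimes h)_{(0)}\otimes a_{(1)}(n\otimes h)_{(1)}$. I expect this to be the crux of part (1): both sides must be expanded in Sweedler notation, and the verification will hinge on the coassociativity of $\Delta_H$ applied to $a_{(1)}$ and to $h$, matching the two groupings of tensor factors.

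For part (2), I would first install the associative $A$-action $a\cdot(n\otimes h)=a_{(0)}\cdot n\otimes a_{(1)}h$ and check that it makes $N\otimes H$ an $(A,H)$-Hopf module in the sense of the penultimate definition before \eqref{1e}; this Hopf-module compatibility is parallel to the verification of \eqref{1e} in part (1) and should go through by the same coassociativity argument. The new content is checking the two transposed Poisson module axioms \eqref{1b} and \eqref{1c} linking the associative action $\cdot$ and the Lie action $\di$ on $N\otimes H$. For \eqref{1b} I would expand $2\{a,a'\}\cdot(n\otimes h)$ using \eqref{1d} to rewrite the coaction of the bracket, and compare against $a\di(a'\cdot(n\otimes h))-a'\di(a\cdot(n\otimes h))$; for \eqref{1c} the computation is analogous. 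Here is where the hypothesis that $H$ is commutative becomes indispensable: when the two $H$-factors $a_{(1)}$ and $a'_{(1)}$ (coming from $a$ and $a'$) are multiplied into $H$, the orders in which they appear on the two sides of \eqref{1b}/\eqref{1c} differ, and only commutativity of $m_H$ reconciles them after the underlying identity in $N$ is applied.

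The main obstacle I anticipate is bookkeeping the Sweedler indices correctly in the \eqref{1b}/\eqref{1c} verifications of part (2), since each term carries coactions from two different elements of $A$ as well as the comultiplication on $h$. The strategy to control this is to reduce every computation, as early as possible, to the corresponding transposed Poisson module identity for $N$ (which holds by hypothesis) tensored against a product of $H$-elements, and then to use commutativity of $H$ to bring the $H$-factors into a common form. Once the $N$-component identity is factored out, what remains is purely an equality in $H$ between two products of the group-like pieces $a_{(1)},a'_{(1)},h$ (or their comultiples), which commutativity settles immediately. I do not expect any subtlety beyond careful index-tracking; no appeal to the antipode is needed for this lemma, and the counit and coassociativity axioms suffice alongside commutativity.
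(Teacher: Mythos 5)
Your part (2) is correct and is exactly the paper's argument: expand through \eqref{1d}, apply \eqref{1b} and \eqref{1c} in $N$, and use commutativity of $H$ to rewrite $a'_{(1)}a_{(1)}h$ as $a_{(1)}a'_{(1)}h$ in the terms of the form $a'_{(0)}\di(a_{(0)}\cdot n)\ot a'_{(1)}a_{(1)}h$. The genuine gap is in part (1), at the step you wave through as routine. Writing out the Lie $A$-module axiom for $N\ot H$, relation \eqref{1d} and the Lie module axiom in $N$ give
\begin{align*}
\{a,a'\}\di(n\ot h)&=\{a_{(0)},a'_{(0)}\}\di n\ot a_{(1)}a'_{(1)}h\\
&=a_{(0)}\di(a'_{(0)}\di n)\ot a_{(1)}a'_{(1)}h-a'_{(0)}\di(a_{(0)}\di n)\ot a_{(1)}a'_{(1)}h,
\end{align*}
while the definition of the action gives
\begin{align*}
a\di(a'\di(n\ot h))-a'\di(a\di(n\ot h))
=a_{(0)}\di(a'_{(0)}\di n)\ot a_{(1)}a'_{(1)}h-a'_{(0)}\di(a_{(0)}\di n)\ot a'_{(1)}a_{(1)}h.
\end{align*}
The two sides differ by $a'_{(0)}\di(a_{(0)}\di n)\ot\bigl(a'_{(1)}a_{(1)}-a_{(1)}a'_{(1)}\bigr)h$. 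This is precisely the reordering problem you correctly isolate in part (2) --- but in part (1) there is no commutativity hypothesis available to kill it. Antisymmetry of the bracket combined with \eqref{1d} only yields $\{a_{(0)},a'_{(0)}\}\ot(a_{(1)}a'_{(1)}-a'_{(1)}a_{(1)})=0$, which says nothing about the iterated action $a'_{(0)}\di(a_{(0)}\di n)$. So your claim that the Lie bracket compatibility ``should follow from \eqref{1d} together with the fact that $N$ is a Lie $A$-module'' does not close, and your assessment that the crux of (1) is \eqref{1e} is inverted: \eqref{1e} is the harmless part (pure coassociativity, as you say), while the Lie axiom is where the difficulty lives.

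Moreover, this gap cannot be repaired by better Sweedler bookkeeping, because what you are trying to prove fails without a further hypothesis. Take $H=\bk S_3$ (non-commutative, though even cocommutative), let $A=\bk[x,y]/(xy)$ be the $H$-comodule algebra determined by the $S_3$-grading $\deg(x^i)=(12)^i$, $\deg(y^j)=(13)^j$, equip it with the zero bracket (so \eqref{1d} holds trivially), and let $N=\bk$ be the Lie $A$-module $a\di c=\lambda(a)c$ for a linear map $\lambda:A\rightarrow\bk$ with $\lambda(x)=\lambda(y)=1$. Then $\{x,y\}\di(1\ot e)=0$, but
\begin{align*}
x\di(y\di(1\ot e))-y\di(x\di(1\ot e))=1\ot\bigl((12)(13)-(13)(12)\bigr)\neq0,
\end{align*}
so $N\ot H$ is not a Lie $A$-module. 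The paper sidesteps this entirely: its proof of (1) consists of citing \cite{Gu}, thereby inheriting whatever standing hypotheses are in force there, and its only new content is the part (2) computation, which your proposal reproduces faithfully. To make your direct verification of (1) legitimate you must either assume $H$ commutative in (1) as well, or invoke \cite{Gu} as the paper does.
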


\begin{proof}
In \cite{Gu}, the conclusion (1) have already been verified. We only need to prove (2). For all $a,a'\in A, n\ot h\in N\ot H$,
\begin{align*}
2\{a,a'\}\cdot (n\ot h)&=2\{a,a'\}_{(0)}\cdot n\ot\{a,a'\}_{(1)} h\\
&=2\{a_{(0)},a'_{(0)}\}\cdot n\ot a_{(1)}a'_{(1)} h\\
&=[a_{(0)}\di(a'_{(0)}\cdot n)-a'_{(0)}\di(a_{(0)}\cdot n)]\ot a_{(1)}a'_{(1)} h\\
&=(a_{(0)}\di(a'_{(0)}\cdot n))\ot a_{(1)}a'_{(1)} h-(a'_{(0)}\di(a_{(0)}\cdot n))\ot a_{(1)}a'_{(1)} h\\
&=a\di(a'\cdot (n\ot  h))-a'_{(0)}\di(a\cdot (n\ot h)),
\end{align*}
and
\begin{align*}
2a\cdot (a'\di(n\ot h))&=2a\cdot (a'_{(0)}\di n\ot a'_{(1)}h)\\
&=2a_{(0)}\cdot(a'_{(0)}\di n)\ot a_{(1)}a'_{(1)}h\\
&=[a_{(0)}a'_{(0)}\di n+a'_{(0)}\di(a_{(0)}\cdot n)]\ot a_{(1)}a'_{(1)}h\\
&=(a_{(0)}a'_{(0)}\di n)\ot a_{(1)}a'_{(1)}h+(a'_{(0)}\di(a_{(0)}\cdot n))\ot a_{(1)}a'_{(1)}h\\
&=aa'\di(n\ot h)+a'\di(a\cdot(n\ot h)).
\end{align*}
The relation (\ref{1e}) obviously holds. The proof is completed.
\end{proof}

\begin{lemma}\label{Lem:2b}
\begin{itemize}
  \item [(1)] Let $M$ be an $(\underline{A},H)$-comodule and $N$ a Lie $A$-module. There exists a linear isomorphism
$$\gamma:\! _{\underline{A}}\hbox{Hom}^H(M,N\ot H)\rightarrow \! _{\underline{A}}\hbox{Hom}(M,N),\ f\mapsto(id\ot\varepsilon)\circ f.$$
The inverse $\gamma'$ of $\gamma$ is given by $\gamma'(g)=(g\ot id_H)\circ\rho_M$.
  \item [(2)] Let $H$ be commutative, $M$ a transposed Poisson $(A, H)$-Hopf module and $N$ a transposed Poisson $A$-module. There exists a linear isomorphism
$$\gamma:\! _{\mathcal{P}A}\hbox{Hom}^H(M,N\ot H)\rightarrow\! _{\mathcal{P}A}\hbox{Hom}(M,N),\ f\mapsto(id\ot\varepsilon)\circ f.$$
\end{itemize}
\end{lemma}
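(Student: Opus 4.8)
The plan is to build on part (1) of Lemma~\ref{Lem:2b}, which already supplies a linear isomorphism
$$\gamma:\! _{\underline{A}}\hbox{Hom}^H(M,N\ot H)\rightarrow\! _{\underline{A}}\hbox{Hom}(M,N),\quad f\mapsto(id\ot\varepsilon)\circ f,$$
with explicit inverse $\gamma'(g)=(g\ot id_H)\circ\rho_M$. Since a transposed Poisson $(A,H)$-Hopf module homomorphism is by definition a Lie $A$-linear, $H$-colinear map that is moreover $A$-linear (for the commutative associative action $\cdot$), and since by Lemma~\ref{Lem:2a}(2) the object $N\ot H$ is a genuine transposed Poisson $(A,H)$-Hopf module when $H$ is commutative and $N$ is a transposed Poisson $A$-module, the spaces $_{\mathcal{P}A}\hbox{Hom}^H(M,N\ot H)$ and $_{\mathcal{P}A}\hbox{Hom}(M,N)$ sit inside the corresponding Lie-Hom spaces of part (1). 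The strategy is therefore \emph{not} to reprove that $\gamma$ is a bijection, but to show that $\gamma$ and $\gamma'$ restrict to mutually inverse maps between the two subspaces, i.e.\ that each preserves the extra $A$-linearity condition.

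First I would check that $\gamma$ maps $_{\mathcal{P}A}\hbox{Hom}^H(M,N\ot H)$ into $_{\mathcal{P}A}\hbox{Hom}(M,N)$. If $f$ is a transposed Poisson Hopf module map, then $f$ is in particular Lie $A$-linear and $H$-colinear, so part (1) already gives that $\gamma(f)=(id\ot\varepsilon)\circ f$ is Lie $A$-linear; it remains to verify that $(id\ot\varepsilon)\circ f$ is $A$-linear for the $\cdot$-action. This follows by applying $id\ot\varepsilon$ to the $A$-linearity of $f$ and using the explicit $A$-action on $N\ot H$ from Lemma~\ref{Lem:2a}(2): for $a\in A$, $m\in M$,
$$(id\ot\varepsilon)\bigl(f(a\cdot m)\bigr)=(id\ot\varepsilon)\bigl(a\cdot f(m)\bigr)=(id\ot\varepsilon)\bigl(a_{(0)}\cdot f(m)_{(0)}\ot a_{(1)}f(m)_{(1)}\bigr),$$
and collapsing $\varepsilon$ together with the comodule counit axiom $(id\ot\varepsilon)\circ\rho=id$ recovers $a\cdot\bigl((id\ot\varepsilon)f(m)\bigr)=a\cdot\gamma(f)(m)$.

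Conversely I would verify that $\gamma'$ carries $_{\mathcal{P}A}\hbox{Hom}(M,N)$ back into $_{\mathcal{P}A}\hbox{Hom}^H(M,N\ot H)$. Given $g$ that is $A$-linear and Lie $A$-linear, part (1) already ensures $\gamma'(g)=(g\ot id_H)\circ\rho_M$ is Lie $A$-linear and $H$-colinear, so only $A$-linearity for $\cdot$ must be checked. This uses that $M$ is an $(A,H)$-Hopf module, so $\rho_M(a\cdot m)=a_{(0)}\cdot m_{(0)}\ot a_{(1)}m_{(1)}$; applying $g\ot id_H$, pulling $g$ through the $A$-action by its $A$-linearity, and comparing with the action on $N\ot H$ from Lemma~\ref{Lem:2a}(2) yields $\gamma'(g)(a\cdot m)=a\cdot\gamma'(g)(m)$. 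Since $\gamma$ and $\gamma'$ are already inverse to each other as maps of Lie-Hom spaces by part (1), these two inclusions force them to be mutually inverse on the transposed Poisson subspaces, giving the claimed linear isomorphism.

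I expect the only real subtlety to be bookkeeping with the commutativity of $H$ and the interaction of the counit $\varepsilon$ with the diagonal $A$-coaction on $N\ot H$; the commutativity of $H$ is what makes the $\cdot$-action in Lemma~\ref{Lem:2a}(2) well defined and compatible, and it must be invoked precisely when one commutes $a_{(1)}$ past $f(m)_{(1)}$ or $h$-factors. No genuinely hard step is anticipated: the whole argument is a restriction-of-a-known-isomorphism argument, and the main obstacle is simply confirming that the extra commutative-associative action condition is preserved in both directions, which reduces to the counit and Hopf-module compatibility identities.
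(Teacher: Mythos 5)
Your proposal is correct and follows essentially the same route as the paper: the paper likewise treats part (1) as known (deferring its verification to the reference \cite{Gu}) and proves part (2) by observing, via Lemma~\ref{Lem:2a}(2), that $N\ot H$ is a transposed Poisson $(A,H)$-Hopf module and then checking that $\gamma(f)$ and $\gamma'(g)$ are $A$-linear, i.e.\ that the isomorphism of part (1) restricts to the transposed Poisson Hom spaces. Your spelled-out $A$-linearity computations (using the counit axiom on the $A$-coaction and the Hopf-module compatibility of $\rho_M$) are exactly the checks the paper leaves implicit with ``one could easily check.''
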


\begin{proof}
(1) The verification of $\gamma$ being an isomorphism is the same as in \cite{Gu}. 
 
(2) By Lemma \ref{Lem:2a}, $N\ot H$ is a transposed Poisson $(A, H)$-Hopf module. One could easily check that $\gamma(f)$ and $\gamma'(g)$ are $A$-linear for all $f\in\! _{\mathcal{P}A}\hbox{Hom}^H(M,N\ot H)$ and $g\in\! _{\mathcal{P}A}\hbox{Hom}(M,N)$.
\end{proof}

By Lemma \ref{Lem:2b}, we immediately have

\begin{corollary}\label{cor:2c}
\begin{itemize}
  \item [(1)] If $N$ is an injective Lie $A$-module, then $N\ot H$ is an injective $(A, H)$-comodule.
  \item [(2)] Let $H$ be commutative. If $N$ is an injective transposed Poisson $A$-module, then $N\ot H$ is an injective transposed Poisson $(A, H)$-Hopf module.
\end{itemize}
\end{corollary}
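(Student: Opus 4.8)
The plan is to read off both statements from the adjunction isomorphism $\gamma$ of Lemma \ref{Lem:2b}, invoking the standard principle that a right adjoint of an exact functor carries injective objects to injective objects. Concretely, Lemma \ref{Lem:2b}(1) exhibits $-\ot H$ as right adjoint to the forgetful functor $U\colon {}_{\underline{A}}\mathcal{M}^H\to {}_{\underline{A}}\mathcal{M}$ that remembers only the Lie $A$-module structure of an $(\underline{A},H)$-comodule, since $\gamma$ is a natural bijection between $_{\underline{A}}\hbox{Hom}^H(M,N\ot H)$ and $_{\underline{A}}\hbox{Hom}(M,N)$. As $U$ merely forgets the coaction, it is exact; hence its right adjoint preserves injectives, giving (1). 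Part (2) is the identical argument with $_{\mathcal{P}A}$ in place of $_{\underline{A}}$, now using the commutativity of $H$ to invoke the second isomorphism of Lemma \ref{Lem:2b} and Lemma \ref{Lem:2a}(2) to know that $N\ot H$ genuinely lives in $_{\mathcal{P}A}\mathcal{M}^H$.

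To make this explicit for (1), I would take an arbitrary monomorphism $i\colon M'\hookrightarrow M$ in $_{\underline{A}}\mathcal{M}^H$ together with a morphism $g\colon M'\to N\ot H$, and extend $g$ along $i$. Applying $\gamma$ yields the Lie $A$-linear map $\gamma(g)=(id\ot\varepsilon)\circ g\colon M'\to N$. Since $i$ is injective, it is in particular a monomorphism of Lie $A$-modules after applying $U$, so the injectivity of $N$ in $_{\underline{A}}\mathcal{M}$ supplies a Lie $A$-linear map $h\colon M\to N$ with $h\circ i=\gamma(g)$. Put $\tilde g:=\gamma'(h)=(h\ot id_H)\circ\rho_M$; by Lemma \ref{Lem:2b} this $\tilde g$ belongs to $_{\underline{A}}\hbox{Hom}^H(M,N\ot H)$.

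It then remains to verify $\tilde g\circ i=g$, which is where the naturality of $\gamma$ is used. Since $i$ is $H$-colinear we have $\rho_M\circ i=(i\ot id_H)\circ\rho_{M'}$, and therefore $\tilde g\circ i=(h\ot id_H)\circ\rho_M\circ i=((h\circ i)\ot id_H)\circ\rho_{M'}=(\gamma(g)\ot id_H)\circ\rho_{M'}=\gamma'(\gamma(g))=g$, using $h\circ i=\gamma(g)$ and $\gamma'\circ\gamma=id$. This produces the desired lift and shows $N\ot H$ is injective; the computation for (2) is word-for-word the same once $N\ot H$ is known to be a transposed Poisson $(A,H)$-Hopf module. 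The one point that deserves care is the claim that a categorical monomorphism in $_{\underline{A}}\mathcal{M}^H$ (resp. $_{\mathcal{P}A}\mathcal{M}^H$) is injective as a linear map, so that it remains a monomorphism after forgetting the coaction and can be tested against the injectivity of $N$; the remaining ingredient, the naturality of $\gamma$ --- immediate since $\gamma$ is post-composition with $id\ot\varepsilon$ --- is purely formal.
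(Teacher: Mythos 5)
Your proof is correct and takes essentially the same approach as the paper: the paper deduces the corollary ``immediately'' from Lemma \ref{Lem:2b}, and your argument simply spells out the standard adjunction/lifting details (transport along $\gamma$, lift using injectivity of $N$, transport back along $\gamma'$, with colinearity of the inclusion giving the required naturality) that the paper leaves implicit, including the one genuine point of care that monomorphisms in these categories are injective linear maps.
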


\begin{theorem}\label{thm:2d}
Let $A$ be a $H$-comodule transposed Poisson algebra, and $M$ a transposed Poisson $(A, H)$-Hopf module such that the Lie action and algebra action are associative. Assume that there exists an $H$-colinear map $\phi:H\rightarrow A^A$ such that $\phi(1_H)=1_A$.
\begin{itemize}
  \item [(1)]  If $H$ is commutative or $\phi$ is an algebra map, then every transposed Poisson $(A, H)$-Hopf module which is injective as a Lie $A$-module is an injective $(\underline{A}, H)$-comodule.
  \item [(2)] If $H$ is commutative, then every transposed Poisson $(A, H)$-Hopf module which is injective transposed Poisson $A$-module is an injective transposed Poisson $(A, H)$-Hopf module.
\end{itemize}
\end{theorem}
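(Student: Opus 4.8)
The plan is to realize $M$ as a direct summand, in the relevant category, of the cofree object $M\ot H$, which is injective by Corollary \ref{cor:2c}. For part (1) the ambient category is $_{\underline{A}}\mathcal{M}^H$, and $M\ot H$ is an injective $(\underline{A},H)$-comodule because $M$ is injective as a Lie $A$-module; for part (2) the ambient category is $_{\mathcal{P}A}\mathcal{M}^H$, and since $H$ is commutative $M\ot H$ is an injective transposed Poisson $(A,H)$-Hopf module because $M$ is an injective transposed Poisson $A$-module. In both cases the coaction $\rho_M\colon M\to M\ot H$ is a monomorphism in the ambient category: it is $H$-colinear by coassociativity of the coaction, Lie $A$-linear by (\ref{1e}), and (for part (2)) $A$-linear by the Hopf-module compatibility of the $\cdot$-action. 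It therefore suffices to produce a retraction of $\rho_M$ in the ambient category, for then $M$ is a retract of an injective object and hence injective.

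First I would define the candidate retraction
$$\pi\colon M\ot H\to M,\qquad \pi(n\ot h)=\phi\bigl(h\,S^{-1}(n_{(1)})\bigr)\cdot n_{(0)},$$
using the associative action of $A^A\supseteq\phi(H)$ on $M$ and the bijectivity of the antipode. The retraction identity $\pi\circ\rho_M=\id_M$ is then a short computation: by coassociativity of the coaction one gets $\pi(\rho_M(m))=\phi\bigl(m_{(1)2}S^{-1}(m_{(1)1})\bigr)\cdot m_{(0)}$, and the antipode identity $h_2S^{-1}(h_1)=\varepsilon(h)1_H$ together with $\phi(1_H)=1_A$ collapses this to $\varepsilon(m_{(1)})\,1_A\cdot m_{(0)}=m$.

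The substance of the proof is checking that $\pi$ is a morphism in the ambient category. I would verify Lie $A$-linearity by expanding $\pi\bigl(a\di(n\ot h)\bigr)=\pi(a_{(0)}\di n\ot a_{(1)}h)$, applying (\ref{1e}) to compute the coaction of $a_{(0)}\di n$ and using the anti-multiplicativity of $S^{-1}$; writing $b=\phi\bigl(h\,S^{-1}(n_{(1)})\bigr)\in A^A$ one is reduced to the identity $a\di(b\cdot m)=b\cdot(a\di m)$, which I would obtain by interchanging $a$ and $b$ in the module axiom (\ref{1c}) (legitimate since $ab=ba$) and combining with the associativity hypothesis $(ab)\di m=b\cdot(a\di m)$. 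The remaining scalar factors of the form $a_{(1)2}(\cdots)S^{-1}(a_{(1)1})$ must be brought together so that $a_{(1)2}S^{-1}(a_{(1)1})=\varepsilon(a_{(1)})1_H$ applies; this rearrangement is exactly where the hypothesis enters — it is immediate when $H$ is commutative, and otherwise follows from the $H$-colinearity of $\phi$ together with its being an algebra map. The $H$-colinearity of $\pi$ is a parallel computation using $\Delta\circ S^{-1}=(S^{-1}\ot S^{-1})\circ\tau\circ\Delta$, the $H$-colinearity of $\phi$, and the compatibility of the $\cdot$-action with the coaction. For part (2) I would additionally check $A$-linearity: since $H$ is commutative the scalars again telescope via $a_{(1)2}S^{-1}(a_{(1)1})=\varepsilon(a_{(1)})1_H$, and one is left with $b\cdot(a\cdot m)=a\cdot(b\cdot m)$, which holds because $M$ is a module over the commutative associative algebra $A$.

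The main obstacle is this morphism check, specifically the antipode and Sweedler bookkeeping needed to telescope the $A$-coaction scalars, and the case split between $H$ commutative and $\phi$ an algebra map; by contrast, the injectivity of $M\ot H$ and the retraction identity are essentially immediate from Corollary \ref{cor:2c} and the antipode axioms. Once $\pi$ is known to be a morphism in the ambient category, the conclusion is formal: $\rho_M$ is a split monomorphism, so $M$ is a direct summand of the injective object $M\ot H$ and is therefore itself injective.
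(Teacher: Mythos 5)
Your proposal is correct and takes essentially the same route as the paper's own proof: the paper uses the identical retraction $\lambda(m\ot h)=\phi(hS^{-1}(m_{(1)}))\cdot m_{(0)}$ of $\rho_M$, verifies it is Lie $A$-linear via the same identity $a\di(b\cdot m)=b\cdot(a\di m)$ (obtained from \eqref{1c} together with commutativity of $A$ and the associativity hypothesis), handles the two cases ($H$ commutative versus $\phi$ an algebra map) exactly as you do, and then concludes by splitting $M$ off the injective object $M\ot H$ furnished by Corollary \ref{cor:2c}. One minor correction: in the case where $\phi$ is an algebra map but $H$ is not commutative, the rearrangement that lets $\phi(a_{(1)2})$ and $\phi(S^{-1}(a_{(1)1}))$ come together to give $\varepsilon(a_{(1)})1_A$ rests on the commutativity of $A$ (images of $\phi$ commute in $A$), not on the $H$-colinearity of $\phi$.
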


\begin{proof}
(1) Let $M$ be a transposed Poisson $(A, H)$-Hopf module and consider the linear map $\lambda:M\ot H\rightarrow M$ defined by
$$\lambda(m\ot h)=\phi(hS^{-1}(m_{(1)}))\cdot m_{(0)},\quad \forall m\in M,h\in H.$$
Just as shown in \cite{Gu}, $\lambda\circ\rho_M=id_M$ and $\lambda$ is $H$-colinear, which means that $\rho_M$ is an injective map. 

For all $a\in A,m\in M,h\in H$, we have
\begin{align*}
\lambda(a\di (m\ot h))&=\lambda(a_{(0)}\di m\ot a_{(1)}h)\\
&=\phi(a_{(1)}hS^{-1}((a_{(0)}\di m)_{(1)}))\cdot (a_{(0)}\di m)_{(0)}\\
&=\phi(a_{(1)2}hS^{-1}(a_{(1)1}m_{(1)}))\cdot (a_{(0)}\di m_{(0)}),
\end{align*}
and
\begin{align*}
a\di\lambda(m\ot h)&=a\di(\phi(hS^{-1}_H(m_{(1)}))\cdot m_{(0)})\\
&=2\phi(hS^{-1}_H(m_{(1)}))\cdot(a\di m)-(a\phi(hS^{-1}_H(m_{(1)})))\di m\\
&=\phi(hS^{-1}_H(m_{(1)}))\cdot(a\di m),
\end{align*}
where the last identity holds since $\phi(hS^{-1}_H(m_{(1)}))\in A^A$ and the Lie action and algebra action are associative. Now if $H$ is commutative, we obtain
$$\lambda(a\di (m\ot h))=a\di\lambda(m\ot h),$$
that is, $\lambda$ is Lie $A$-linear and a homomorphism in $_{\underline{A}}\mathcal{M}^H$. If $\phi$ is an algebra map, using the fact that $A$ is commutative, we have
\begin{align*}
&\phi(a_{(1)2}hS^{-1}(a_{(1)1}m_{(1)}))\cdot (a_{(0)}\di m_{(0)})\\
&=\phi(a_{(1)2}S^{-1}(a_{(1)1}))\phi(hS^{-1}(m_{(1)}))\cdot (a_{(0)}\di m_{(0)})\\
&=\phi(hS^{-1}(m_{(1)}))\cdot (a\di m_{(0)}).
\end{align*}
It follows that $\lambda$ is Lie $A$-linear and a homomorphism in $_{\underline{A}}\mathcal{M}^H$. It is easy to verify that $\rho_M$ is a homomorphism of $(\underline{A},H)$-comodule. If $M$ is an injective Lie $A$-module, by a similar argument as in \cite{Gu}, $M$ is an injective $(\underline{A},H)$-comodule.

(2) By a similar computation as above, we obtain that $\lambda$ given in (1) is $A$-linear. Thus $\lambda$ is a homomorphism of transposed Poisson $(A, H)$-Hopf modules. By the same arguments as \cite{Gu}, we conclude that an injective transposed Poisson $A$-module is also an injective transposed Poisson $(A,H)$-Hopf module.
\end{proof}

\begin{corollary}
Let $A$ be a commutative $H$-comodule algebra, and $M$ an $(A, H)$-Hopf module. Assume that there is an $H$-colinear map $\phi:H\rightarrow A$ such that $\phi(1_H)=1_A$.
\begin{itemize}
  \item [(1)] Then every $(A, H)$-Hopf module is an injective $H$-comodule.
  \item [(2)] Let $H$ be commutative. Then every $(A, H)$-Hopf module which is injective as an $A$-module is an injective $(A, H)$-Hopf module.
\end{itemize}
\end{corollary}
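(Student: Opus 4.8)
The plan is to obtain the corollary as the specialization of Theorem \ref{thm:2d} in which both the Poisson bracket on $A$ and the Lie action on the module are taken to be zero. Setting $\{a,a'\}=0$ for all $a,a'\in A$ makes the transposed Leibniz identity (\ref{1a}) hold vacuously and forces $A^A=A$, so the given $H$-colinear map $\phi:H\rightarrow A$ with $\phi(1_H)=1_A$ is precisely an $H$-colinear map into $A^A$. Endowing an arbitrary $(A,H)$-Hopf module $M$ with the zero Lie action $a\di m=0$ turns $M$ into a transposed Poisson $(A,H)$-Hopf module for which the Lie and algebra actions are trivially associative; thus every hypothesis of Theorem \ref{thm:2d} is in force.

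First I would reuse the retraction $\lambda:M\ot H\rightarrow M$, $\lambda(m\ot h)=\phi(hS^{-1}(m_{(1)}))\cdot m_{(0)}$, from the proof of Theorem \ref{thm:2d}, together with the two properties established there (and in \cite{Gu}) that need no commutativity: $\lambda\circ\rho_M=\id_M$ and $\lambda$ is $H$-colinear. These say exactly that $\rho_M:M\rightarrow M\ot H$ is a split monomorphism of $H$-comodules with retraction $\lambda$.

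For part (1), the key point is that after forgetting the Lie structure the cofree comodule $M\ot H$ is an injective $H$-comodule with no hypothesis on $M$ at all; this is why, unlike in Theorem \ref{thm:2d}(1), one needs neither injectivity of $M$ as a Lie module nor commutativity of $H$ (respectively $\phi$ being multiplicative), since only the $H$-colinearity of $\lambda$ is used. As a retract of the injective comodule $M\ot H$, the module $M$ is itself an injective $H$-comodule, giving (1).

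For part (2), with $H$ commutative I would check that this same $\lambda$ is moreover $A$-linear, by the computation carried out in the proof of Theorem \ref{thm:2d}(2) (using commutativity of $H$ and of $A$ together with the colinearity of $\phi$); this promotes $\rho_M$ to a split monomorphism in the category of $(A,H)$-Hopf modules. Invoking the classical fact that $M\ot H$ is an injective $(A,H)$-Hopf module whenever $M$ is an injective $A$-module (the non-Poisson analogue of Corollary \ref{cor:2c}(2)) and again that a retract of an injective object is injective yields (2). The main thing to get right is part (2): the $A$-linearity of $\lambda$ under commutativity of $H$, and the input that the modules $M\ot H$ are injective $(A,H)$-Hopf modules; part (1) is then essentially formal, the only subtlety being the observation that forgetting the Lie action replaces the notion of injective $(\underline{A},H)$-comodule by that of injective $H$-comodule, for which cofreeness gives injectivity automatically.
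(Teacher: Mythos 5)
Your proposal is correct and takes essentially the same route as the paper: both specialize to the trivial Poisson bracket and trivial Lie action (so that $A=A^A$, every $(A,H)$-Hopf module becomes a transposed Poisson $(A,H)$-Hopf module, and $\lambda$ is automatically Lie $A$-linear), and both reuse the splitting $\lambda$ of $\rho_M$ from the proof of Theorem \ref{thm:2d}. The only difference is presentational: the paper cites Theorem \ref{thm:2d}(1),(2) as a black box after this trivialization, whereas you unpack its proof (injectivity of the cofree comodule $M\ot H$, retract of an injective is injective), which makes explicit the point the paper leaves implicit, namely why part (1) requires neither commutativity of $H$ nor $\phi$ being an algebra map.
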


\begin{proof}
(1) Since any commutative algebra $A$ is a transposed Poisson algebra with a trivial Poisson bracket, we have $A=A^A$, and any commutative $H$-comodule algebra $A$ is an $H$-comodule transposed Poisson algebra. Any $H$-comodule is an $(\underline{A}, H)$-comodule with a trivial Lie $A$-action, and any $(A, H)$-Hopf module is a transposed Poisson $(A, H)$-Hopf module with a trivial Lie $A$-action. So the map $\lambda$ in the proof of Theorem \ref{thm:2d} is Lie $A$-linear. Also any homomorphism of $H$-comodules is a homomorphism of $(\underline{A}, H)$-comodules. The result follows from Theorem \ref{thm:2d}(1).

(2) Since any homomorphism of $(A, H)$-Hopf modules is a homomorphism of transposed Poisson $(A, H)$-Hopf modules, the result follows from Theorem \ref{thm:2d}(2).
\end{proof}

\vspace{2mm}

\section{Fundamental theorem of transposed Poisson Hopf modules}
\def\theequation{3.\arabic{equation}}
\setcounter{equation} {0}

In what follows, we will denote $B=A^{AcoH}$.
\begin{lemma}\label{Lem:3a}
Let $M$ be a transposed Poisson $B$-module. Then $A\ot_BM$ is a transposed Poisson $(A,H)$-Hopf module, where the $A$-action and the Lie $A$-action are given by
\begin{align*}
&a'\cdot (a\ot_B m)=a' a\ot_Bm\\
&a'\di (a\ot_B m)=a'\di a\ot_Bm,
\end{align*}
and the $H$-coaction is given by
$$\rho_{A\ot_BM}(a\ot_B m)=a_{(0)}\ot_Bm\ot a_{(1)},$$
for all $a,a'\in A,m\in M$.
\end{lemma}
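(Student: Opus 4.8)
The plan is to verify, one map at a time, that the three proposed structure maps are well defined on the balanced tensor product $A\ot_B M$, and then to check each axiom of a transposed Poisson $(A,H)$-Hopf module. Since $B=A^{AcoH}$ is a subalgebra of $A$ and $M$ is in particular a left $B$-module via $\cdot$, the product $A\ot_B M$ is defined, with balancing relations $ab\ot_B m=a\ot_B(b\cdot m)$ for $b\in B$, $a\in A$, $m\in M$.

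The crux of the argument is well-definedness, and here the Lie $A$-action is the only delicate case. The $\cdot$-action passes to the quotient directly, since $a'(ab)\ot_B m=a'a\ot_B(b\cdot m)$; and for the coaction I would use that $b\in B\subseteq A^{coH}$ gives $b_{(0)}\ot b_{(1)}=b\ot 1_H$, so that $\rho(ab\ot_B m)=a_{(0)}b\ot_B m\ot a_{(1)}=a_{(0)}\ot_B(b\cdot m)\ot a_{(1)}=\rho(a\ot_B(b\cdot m))$. For the Lie action $a'\di(a\ot_B m)=\{a',a\}\ot_B m$, the point is to show $\{a',ab\}\ot_B m=\{a',a\}\ot_B(b\cdot m)$ for $b\in B$. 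This is exactly where the condition $B\subseteq A^A$ enters: since $A$ is commutative and $b\in A^A$, one has $\{ab,a'\}=\{ba,a'\}=b\{a,a'\}$, hence $\{a',ab\}=b\{a',a\}$, and therefore $\{a',ab\}\ot_B m=b\{a',a\}\ot_B m=\{a',a\}\ot_B(b\cdot m)$ by the $B$-balancing. I expect this to be the main obstacle, and it is precisely where the transposed-Poisson-center condition in the definition of $B=A^{AcoH}$ is indispensable; the coinvariance condition $B\subseteq A^{coH}$ plays the analogous role for the $H$-coaction, which is why the intersection $A^{AcoH}$ is the correct base ring.

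Granting well-definedness, the remaining algebraic verifications reduce to identities already available in $A$. The $\cdot$-action makes $A\ot_B M$ a left $A$-module and the $\di$-action makes it a Lie $A$-module, both inherited from the module-over-itself structure on $A$ recorded in the Remark following the definition of transposed Poisson module; in particular the Lie-module axiom is just the Jacobi identity for $A$. After stripping off $\ot_B m$, the compatibility conditions (\ref{1b}) and (\ref{1c}) collapse to $2\{a',a''\}a=\{a',a''a\}-\{a'',a'a\}$ and $2a'\{a'',a\}=\{a'a'',a\}+\{a'',a'a\}$ in $A$; the latter is the transposed Leibniz rule (\ref{1a}), and both hold because $A$ is a transposed Poisson $A$-module.

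Finally I would check the two comodule-compatibility conditions. For the $(A,H)$-Hopf module condition, $(a'a\ot_B m)_{(0)}\ot(a'a\ot_B m)_{(1)}=(a'a)_{(0)}\ot_B m\ot(a'a)_{(1)}=a'_{(0)}a_{(0)}\ot_B m\ot a'_{(1)}a_{(1)}$, which equals $a'_{(0)}\cdot(a\ot_B m)_{(0)}\ot a'_{(1)}(a\ot_B m)_{(1)}$ by the $H$-comodule algebra axiom for $A$. For (\ref{1e}), applying $\rho$ to $a'\di(a\ot_B m)=\{a',a\}\ot_B m$ and invoking the $H$-comodule transposed Poisson algebra relation (\ref{1d}) gives $\{a',a\}_{(0)}\ot_B m\ot\{a',a\}_{(1)}=\{a'_{(0)},a_{(0)}\}\ot_B m\ot a'_{(1)}a_{(1)}$, which is exactly $a'_{(0)}\di(a\ot_B m)_{(0)}\ot a'_{(1)}(a\ot_B m)_{(1)}$. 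Coassociativity and counitality of $\rho$ follow from the corresponding properties of the $H$-coaction on $A$, completing the verification.
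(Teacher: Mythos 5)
Your proposal is correct and follows essentially the same route as the paper: direct verification of well-definedness (with the Lie action as the crux, resting on the condition $b\{a,a'\}=\{ba,a'\}$ for $b\in B\subseteq A^A$) followed by a check of the module, Hopf-module, and compatibility axioms, the latter reducing to the transposed Leibniz rule and relation (\ref{1d}). Your derivation of the key balancing identity $\{a',ab\}=b\{a',a\}$ via antisymmetry and commutativity is a slight streamlining of the paper's computation, which instead expands $\{a',ab\}$ by the transposed Leibniz rule and then cancels the extra term using the definition of $A^A$; the content is the same.
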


\begin{proof}
It is obvious that the coaction is well defined. For all $a,a'\in A,m\in M$, it is straightforward to see that 
$$(a'\cdot(a\ot_B m))_{(0)}\ot (a'\cdot(a\ot_B m))_{(1)}=a'_{(0)}\cdot(a\ot_Bm)_{(0)}\ot a'_{(1)}(a\ot_Bm)_{(1)},$$
hence $A\ot_BM$ is an $(A,H)$-Hopf module. For $b\in B$,
\begin{align*}
a'\di(ab\ot_Bm)&=(a'\di(ab))\ot_Bm\\
&\stackrel{(\ref{1a})}{=}(2b\{a',a\}-\{ba',a\})\ot_Bm\\
&=b\{a',a\}\ot_Bm+b\{a',a\}\ot_Bm-\{ba',a\}\ot_Bm\\
&=\{a',a\}\ot_Bb\cdot m+(b\{a',a\}-\{ba',a\})\ot_Bm\\
&=\{a',a\}\ot_Bb\cdot m\\
&=a'\di (a\ot_Bb\cdot m),
\end{align*}
where the fifth identity uses the definition of $A^A$, therefore the Lie action is well defined and $A\ot_BM$ is a Lie $A$-module. Since
\begin{align*}
2\{a'',a'\}\cdot (a\ot_Bm)&=(2\{a'',a'\}a)\ot_Bm\\
&=\{aa'',a'\}\ot_Bm+\{a'',aa'\}\ot_Bm\\
&=(a''\di a'a)\ot_Bm-(a'\di a''a)\ot_Bm\\
&=a''\di(a'a\ot_Bm)-a'\di(a''a\ot_Bm)\\
&=a''\di(a'\cdot(a\ot_Bm))-a'\di(a''\cdot(a\ot_Bm)),
\end{align*}
and
\begin{align*}
2a''\cdot (a'\di(a\ot_Bm))&=(2a''\{a', a\})\ot_Bm\\
&=\{a''a',a\}\ot_Bm+\{a',a''a\}\ot_Bm\\
&=a''a'\di(a\ot_Bm)+a'\di(a''\cdot(a\ot_Bm)),
\end{align*}
therefore $A\ot_BM$ is a  transposed Poisson $A$-module. Finally
\begin{align*}
&(a'\di(a\ot_B m))_{(0)}\ot (a'\di(a\ot_B m))_{(1)}\\
&=(\{a',a\}\ot_B m)_{(0)}\ot (\{a',a\}\ot_B m)_{(1)}\\
&=\{a',a\}_{(0)}\ot_B m\ot \{a',a\}_{(1)}\\
&=\{a'_{(0)},a_{(0)}\}\ot_B m\ot a'_{(1)}a_{(1)}\\
&=(a'_{(0)}\di(a_{(0)}\ot_B m))\ot a'_{(1)}a_{(1)}\\
&=(a'_{(0)}\di(a\ot_B m)_{(0)})\ot a'_{(1)}(a\ot_B m)_{(1)},
\end{align*}
which means that $A\ot_BM$ is a transposed Poisson $(A,H)$-Hopf module. The proof is completed.
\end{proof}

For every transposed Poisson $(A,H)$-Hopf module $M$, from Lemma \ref{Lem:1c} and Lemma \ref{Lem:3a}, we deduce that $A\ot_BM^{AcoH}$ is a Poisson $(A,H)$-Hopf module.

\begin{lemma}\label{Lem:3b}
Let $M$ be a transposed Poisson $(A,H)$-Hopf module. Then the linear map 
$$\alpha:A\ot_BM^{AcoH}\rightarrow M,\ a\ot_Bm\mapsto a\cdot m$$ 
is a morphism of transposed Poisson $(A,H)$-Hopf modules.
\end{lemma}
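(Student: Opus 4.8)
The plan is to verify separately the four conditions defining a morphism in ${}_{\mathcal{P}A}\mathcal{M}^H$: that $\alpha$ is well defined on the balanced tensor product, $A$-linear for the commutative action $\cdot$, Lie $A$-linear for the action $\di$, and $H$-colinear. By Lemma~\ref{Lem:3a} the relevant structures on $A\ot_BM^{AcoH}$ are exactly the ones recorded there, and the single fact that drives everything is the inclusion $M^{AcoH}\subseteq M^A\cap M^{coH}$, which follows from the very definition of $M^{AcoH}$.

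First I would check that $\alpha$ descends to the tensor product over $B$. Since $M^{AcoH}$ carries the restriction of the $\cdot$-action of $A$ (Lemma~\ref{Lem:1d}(3)), for $b\in B$ associativity of the $A$-module action gives $(ab)\cdot m=a\cdot(b\cdot m)$, so $\alpha(ab\ot_Bm)=\alpha(a\ot_B b\cdot m)$ and $\alpha$ is well defined. The $A$-linearity for the commutative action is then immediate: $\alpha(a'\cdot(a\ot_Bm))=\alpha(a'a\ot_Bm)=(a'a)\cdot m=a'\cdot(a\cdot m)=a'\cdot\alpha(a\ot_Bm)$.

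The key step is Lie $A$-linearity, and I expect this to be the one conceptual point. By the formula for the Lie action in Lemma~\ref{Lem:3a}, together with the fact that the Lie action of $A$ on itself is the Poisson bracket, one has $\alpha(a'\di(a\ot_Bm))=\alpha(\{a',a\}\ot_Bm)=\{a',a\}\cdot m$, while $a'\di\alpha(a\ot_Bm)=a'\di(a\cdot m)$. Hence the identity to be shown is precisely $\{a',a\}\cdot m=a'\di(a\cdot m)$, which is nothing but the defining relation of $M^A$ applied to $m\in M^{AcoH}\subseteq M^A$. This is exactly why one must tensor with $M^{AcoH}$ rather than an arbitrary $B$-module.

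Finally, for $H$-colinearity I would use $m\in M^{coH}$, so that $\rho_M(m)=m\ot 1_H$. Combining this with the $(A,H)$-Hopf module compatibility $(a\cdot m)_{(0)}\ot(a\cdot m)_{(1)}=a_{(0)}\cdot m_{(0)}\ot a_{(1)}m_{(1)}$ yields $\rho_M(\alpha(a\ot_Bm))=a_{(0)}\cdot m\ot a_{(1)}$, which matches $(\alpha\ot id_H)(\rho_{A\ot_BM^{AcoH}}(a\ot_Bm))=(a_{(0)}\cdot m)\ot a_{(1)}$ via the coaction formula of Lemma~\ref{Lem:3a}. I do not anticipate any genuine obstacle: once $M^{AcoH}\subseteq M^A\cap M^{coH}$ is in hand, all four verifications are short, the Lie $A$-linearity being the only step that requires the defining property of the invariants.
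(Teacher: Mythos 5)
Your proposal is correct and follows essentially the same route as the paper's own proof: the key step in both is Lie $A$-linearity, reduced via the formula $a'\di(a\ot_B m)=\{a',a\}\ot_B m$ to the identity $\{a',a\}\cdot m=a'\di(a\cdot m)$, which is the defining relation of $M^A$ applied to $m\in M^{AcoH}$. The paper simply declares well-definedness, $A$-linearity, and $H$-colinearity to be clear, whereas you spell them out (correctly, using the $B$-balancing and $\rho_M(m)=m\ot 1_H$), so your write-up is a more detailed version of the same argument.
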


\begin{proof}
Clearly $\alpha$ is well defined and a left $A$-module map. For all $a',a\in A,m\in M$,
\begin{align*}
\alpha(a'\di(a\ot_B m))&=\alpha(\{a',a\}\ot_B m)\\
&=\{a',a\}\cdot m\\
&=a'\di(a\cdot m)\\
&=a'\di\alpha(a\ot_B m),
\end{align*}
thus $\alpha$ is a Lie $A$-module map. It is easy to see that $\alpha$ is right $H$-colinear. The proof is completed. 
\end{proof}

Consider $H$ as a right $H$-comodule via $\Delta_H$ and assume that there exists a homomorphism of right $H$-comodule $\phi:H\rightarrow A$ such that $\phi(1_H)=1_A$. For any transposed Poisson $(A,H)$-Hopf module $M$, define linear map
$$p_M:M\rightarrow M,\quad m\mapsto\phi(S^{-1}(m_{(1)}))\cdot m_{(0)}.$$

\begin{lemma}\cite{Gu}
With the above notations, we have
\begin{itemize}
  \item [(1)] Im$p_M=M^{coH}$.
  \item [(2)] $p^2_M=p_M$, and $p_M$ is a projection.
\end{itemize}
\end{lemma}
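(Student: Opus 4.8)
The plan is to observe first that the operator $p_M$ is assembled only from the commutative associative action $\cdot$ of $A$ on $M$ and from the $H$-coaction; the Lie action $\di$ plays no role whatsoever in its definition. Consequently the entire argument lives inside the underlying $(A,H)$-Hopf module of $M$ and reduces to the computation carried out in \cite{Gu}. I will prove (1) by establishing the two inclusions $\mathrm{Im}\, p_M\subseteq M^{coH}$ and $M^{coH}\subseteq \mathrm{Im}\, p_M$, and then deduce (2) as a formal consequence.

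For the inclusion $\mathrm{Im}\, p_M\subseteq M^{coH}$, I would compute $\rho_M(p_M(m))$ directly and show it equals $p_M(m)\ot 1_H$. Starting from $p_M(m)=\phi(S^{-1}(m_{(1)}))\cdot m_{(0)}$, applying $\rho_M$ and invoking the $(A,H)$-Hopf module compatibility $(a\cdot m)_{(0)}\ot(a\cdot m)_{(1)}=a_{(0)}\cdot m_{(0)}\ot a_{(1)}m_{(1)}$ splits the coaction onto the $A$-factor $\phi(S^{-1}(m_{(1)}))$ and onto $m_{(0)}$. The coaction on the $A$-factor is governed by the $H$-colinearity of $\phi$, namely $\phi(h)_{(0)}\ot\phi(h)_{(1)}=\phi(h_1)\ot h_2$, together with the anti-coalgebra property of the bijective antipode, $\Delta_H(S^{-1}(h))=S^{-1}(h_2)\ot S^{-1}(h_1)$. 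After rewriting the iterated coaction on $m$ as a single multiple Sweedler expression via comodule coassociativity, the $H$-leg collapses through the identity $S^{-1}(h_2)h_1=\varepsilon_H(h)1_H$ followed by the counit axiom, leaving exactly $p_M(m)\ot 1_H$.

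For the reverse inclusion, I would take $m\in M^{coH}$, so that $\rho_M(m)=m\ot 1_H$, i.e. $m_{(0)}=m$ and $m_{(1)}=1_H$. Substituting into the definition and using $S^{-1}(1_H)=1_H$, the hypothesis $\phi(1_H)=1_A$, and that $1_A$ acts as the identity gives $p_M(m)=\phi(1_H)\cdot m=m$. This simultaneously proves $M^{coH}\subseteq\mathrm{Im}\, p_M$ and shows that $p_M$ restricts to the identity on $M^{coH}$. Combining the two inclusions yields (1).

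Finally, (2) follows formally: for any $m$ the element $p_M(m)$ lies in $M^{coH}$ by the first inclusion, and $p_M$ fixes $M^{coH}$ pointwise by the second, so $p_M^2(m)=p_M(p_M(m))=p_M(m)$; hence $p_M$ is idempotent and is a projection of $M$ onto $M^{coH}$. The only delicate point in the whole argument is the bookkeeping of nested Sweedler indices in the first inclusion --- correctly tracking the several tensor legs produced by iterating the coaction and by comultiplying inside the argument of $\phi$, and applying the anti-coalgebra property of $S^{-1}$ in the right order. Once the indices are aligned, the collapse via the $S^{-1}$-antipode and counit identities is automatic, and no feature of the transposed Poisson structure is needed.
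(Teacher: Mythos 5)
Your proof is correct: the two inclusions $\mathrm{Im}\,p_M\subseteq M^{coH}$ (via Hopf-module compatibility, colinearity of $\phi$, the anti-coalgebra property of $S^{-1}$, and the collapse $S^{-1}(h_2)h_1=\varepsilon_H(h)1_H$) and $p_M|_{M^{coH}}=\mathrm{id}$ (via $\phi(1_H)=1_A$) are exactly the right ingredients, and (2) follows formally as you say. The paper itself gives no proof, deferring to \cite{Gu}, and your argument is precisely the standard computation carried out there, correctly observing that only the underlying $(A,H)$-Hopf module structure is involved and the transposed Poisson structure plays no role.
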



\begin{lemma}\label{Lem:3c}
Assume that there exists a homomorphism of right $H$-comodule $\phi:H\rightarrow A^A$ which is also an algebra map. If for all $a,a'\in A,m\in M$, 
\begin{equation}
\{1,aa'\}\cdot p_M(m)=\phi(a'_{(1)})\di(a\cdot p_M(a'_{(0)}\cdot m)),\label{3a}
\end{equation}
then $M^{coH}=M^{AcoH}$. Meanwhile we have
$$\{1,a\}\cdot p_M(m)=0.$$
\end{lemma}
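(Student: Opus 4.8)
=== PROOF PROPOSAL ===

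The plan is to unwind the definition of $M^{AcoH} = M^{coH} \cap M^A$ and prove the two inclusions separately, leveraging the hypothesis~\eqref{3a} together with the previously established fact that $\im p_M = M^{coH}$. Since $p_M$ is a projection onto $M^{coH}$, every element of $M^{coH}$ has the form $p_M(m)$, so it suffices to show that an arbitrary $p_M(m)$ already lies in $M^A$; this gives $M^{coH} \subseteq M^A$ and hence $M^{coH} \subseteq M^{AcoH}$, while the reverse inclusion $M^{AcoH} \subseteq M^{coH}$ is immediate from the definition.

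First I would recall that, by definition, $p_M(m) \in M^A$ means precisely that $\{a,a'\} \cdot p_M(m) = a \di (a' \cdot p_M(m))$ for all $a,a' \in A$. The strategy is to massage the right-hand side of~\eqref{3a} into exactly this Lie-invariance condition. Taking $a = 1_A$ in the associativity/transposed-Leibniz machinery and using that $\phi$ is an algebra map landing in $A^A$, I would compute $a \di (a' \cdot p_M(m))$ by relating it to $\phi(a'_{(1)}) \di (a \cdot p_M(a'_{(0)} \cdot m))$; the coaction compatibility~\eqref{1e} and the $H$-colinearity of $\phi$ should let me collapse the $a'_{(0)}, a'_{(1)}$ factors. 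The key computational identity~\eqref{3a} is designed so that, after applying the transposed Leibniz rule~\eqref{1a} to rewrite $\{1,aa'\}$ and using commutativity of $A$, the two sides of the invariance condition match up. For the final assertion $\{1,a\}\cdot p_M(m)=0$, I would specialize~\eqref{3a} by setting $a'=1_A$ (so $a'_{(0)}\ot a'_{(1)} = 1_A \ot 1_H$ by the comodule-algebra axiom), giving $\{1,a\}\cdot p_M(m) = \phi(1_H)\di(a\cdot p_M(m)) = 1_A \di (a\cdot p_M(m))$, and then apply Lemma~\ref{Lem:1c} together with Lemma~\ref{Lem:1a} (noting $1_A \in A^A$, so $1_A \di x = 0$ on $A$-invariants once $p_M(m)$ is shown to be in $M^A$).

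The main obstacle I anticipate is bookkeeping in the Sweedler calculus: correctly tracking how $\phi(a'_{(1)})$, the coaction on $p_M(a'_{(0)}\cdot m)$, and the antipode inside $p_M$ interact, and verifying that the $A^A$-membership of $\phi$'s image lets me move $\phi$-factors past the Lie action via the associativity hypothesis (definition of associative actions). I would need to be careful that $p_M(a'_{(0)} \cdot m)$ does not simply equal $a'_{(0)} \cdot p_M(m)$ in general — the coaction twists the antipode argument — so the identity~\eqref{3a} must be used as given rather than naively pulling $a'$ out. Once the invariance $\{a,a'\}\cdot p_M(m) = a \di (a' \cdot p_M(m))$ is extracted, the conclusion $p_M(m) \in M^A$ is immediate, and combined with $p_M(m)\in M^{coH}$ this yields $M^{coH}\subseteq M^{AcoH}$; the trivial-bracket consequence then follows from the lemmas on the center as indicated above.
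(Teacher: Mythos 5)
Your treatment of the main assertion $M^{coH}=M^{AcoH}$ is essentially the paper's own argument: both reduce to showing $p_M(m)\in M^A$ for every $m\in M$ (using $\mathrm{Im}\,p_M=M^{coH}$ and the trivial inclusion $M^{AcoH}\subseteq M^{coH}$), and both do so by expanding $\{a,a'\}\cdot p_M(m)=2a'\{a,1\}\cdot p_M(m)+\{1,aa'\}\cdot p_M(m)$ via the transposed Leibniz rule, rewriting $a\di(a'\cdot p_M(m))$ as $2\{a,1\}a'\cdot p_M(m)+\phi(a'_{(1)})\di(a\cdot p_M(a'_{(0)}\cdot m))$ (using commutativity of $A$, the identity $a'\cdot p_M(m)=\phi(a'_{(1)})\cdot p_M(a'_{(0)}\cdot m)$, which needs $\phi$ colinear and multiplicative with image in $A^A$), and then cancelling by means of \eqref{3a}. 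No objection to that part.

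The gap is in your derivation of the second assertion $\{1,a\}\cdot p_M(m)=0$. Specializing \eqref{3a} at $a'=1_A$ does give $\{1,a\}\cdot p_M(m)=1_A\di(a\cdot p_M(m))$, but you cannot kill the right-hand side with Lemma \ref{Lem:1c}: that lemma requires the module element to lie in $M^A$, and $a\cdot p_M(m)$ is not known to lie in $M^A$ for general $a\in A$, since $M^A$ is only shown to be stable under the action of $B=A^{AcoH}$ (Lemma \ref{Lem:1d}(3)), not under all of $A$. Worse, this route is circular: once $p_M(m)\in M^A$ is established, the invariance condition applied to the pair $(1_A,a)$ gives $1_A\di(a\cdot p_M(m))=\{1,a\}\cdot p_M(m)$, so your specialization reduces the claim to itself. (Note also that $1_A\di x$ need not vanish for general $x\in M$; the map $\{1,\cdot\}$ can be a nonzero derivation, e.g.\ $\{f,g\}=fg'-gf'$ on $\bk[x]$.) The repair is to specialize the \emph{other} variable: set $a=1_A$ in \eqref{3a} to obtain $\{1,a'\}\cdot p_M(m)=\phi(a'_{(1)})\di p_M(a'_{(0)}\cdot m)$; here $\phi(a'_{(1)})\in A^A$ and, by the part already proved, $p_M(a'_{(0)}\cdot m)\in\mathrm{Im}\,p_M=M^{coH}\subseteq M^A$, so Lemma \ref{Lem:1c} applies and the right-hand side vanishes. (The paper's written proof is itself silent on this assertion, but this is the derivation it implicitly relies on.)
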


\begin{proof}
For all $a,a'\in A,m\in M$, on one hand
$$\{a,a'\}\cdot p_M(m)=2a'\{a,1\}\cdot p_M(m)+\{1,aa'\}\cdot p_M(m),$$
and on the other hand
\begin{align*}
a\di(a'\cdot p_M(m))&=a\di(a'\phi(S^{-1}(m_{(1)}))\cdot m_{(0)})\\
&=a\di(\phi(S^{-1}(m_{(1)}))\varepsilon(a'_{(1)})\cdot (a'_{(0)}\cdot m_{(0)}))\\
&=a\di(\phi(a'_{(1)})\cdot p_M(a'_{(0)}\cdot m))\\
&=2\{a,\phi(a'_{(1)})\}\cdot p_M(a'_{(0)}\cdot m)+\phi(a'_{(1)})\di(a\cdot p_M(a'_{(0)}\cdot m))\\
&=2\{a,1\}\phi(a'_{(1)})\cdot p_M(a'_{(0)}\cdot m)+\phi(a'_{(1)})\di(a\cdot p_M(a'_{(0)}\cdot m))\\
&=2\{a,1\}a'\cdot  p_M(m)+\phi(a'_{(1)})\di(a\cdot p_M(a'_{(0)}\cdot m)).
\end{align*}
From the identity (\ref{3a}), we have $\{a,a'\}\cdot p_M(m)=a\di(a'\cdot p_M(m))$, namely, $M^{coH}\subseteq M^{AcoH}$ and $M^{coH}=M^{AcoH}$.
\end{proof}

\begin{remark}
For all $a,a',a''\in A$, if 
\begin{equation}
\{1,aa'\}\cdot p_A(a'')=\phi(a'_{(1)})\di(a\cdot p_A(a'_{(0)}\cdot a'')),\label{3b}
\end{equation}
then by Lemma \ref{Lem:3c}, $A^{coH}=A^{AcoH}$.
\end{remark}

\begin{theorem}\label{Thm:3d}
Let $M$ be a transposed Poisson $(A,H)$-Hopf module. Assume that  there exists a homomorphism of right $H$-comodule $\phi:H\rightarrow A^A$ which is also an algebra map such that the identities (\ref{3a}) and (\ref{3b}) hold. Then the linear map $\alpha$ given in Lemma \ref{Lem:3b} is an isomorphism of transposed Poisson $(A,H)$-Hopf modules.
\end{theorem}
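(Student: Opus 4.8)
The plan is to show that the morphism $\alpha$ from Lemma \ref{Lem:3b} is bijective by exhibiting an explicit two-sided inverse; once $\alpha$ is a bijective morphism in ${}_{\mathcal{P}A}\mathcal{M}^H$, its set-theoretic inverse is automatically $A$-linear, Lie $A$-linear and $H$-colinear, so $\alpha$ is already an isomorphism. The two hypotheses (\ref{3a}) and (\ref{3b}) are used only to secure the identifications that make the construction work: by Lemma \ref{Lem:3c} we have $M^{coH}=M^{AcoH}$, and by the Remark following it we have $A^{coH}=A^{AcoH}=B$. Thus the projection $p_M$, whose image is $M^{coH}$ and which satisfies $p_M^2=p_M$, is in fact a projection of $M$ onto $M^{AcoH}$, and likewise $p_A$ projects $A$ onto $B$.

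First I would define $\beta:M\rightarrow A\ot_B M^{AcoH}$ by $\beta(m)=\phi(m_{(1)})\ot_B p_M(m_{(0)})$. This lands in the correct space precisely because $p_M(m_{(0)})\in M^{coH}=M^{AcoH}$. Writing the iterated coaction as $m_{(0)}\ot m_{(1)}\ot m_{(2)}$, one has $\beta(m)=\phi(m_{(2)})\ot_B \phi(S^{-1}(m_{(1)}))\cdot m_{(0)}$, and applying $\alpha$ gives $\phi(m_{(2)})\phi(S^{-1}(m_{(1)}))\cdot m_{(0)}=\phi(m_{(2)}S^{-1}(m_{(1)}))\cdot m_{(0)}$ since $\phi$ is an algebra map. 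The antipode identity $h_2 S^{-1}(h_1)=\varepsilon(h)1_H$ then collapses this to $\varepsilon(m_{(1)})1_A\cdot m_{(0)}=m$, so $\alpha\circ\beta=\id_M$ and in particular $\alpha$ is surjective.

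The main obstacle is the opposite identity $\beta\circ\alpha=\id$. For $m\in M^{AcoH}\subseteq M^{coH}$ and $a\in A$, using $\rho(a\cdot m)=a_{(0)}\cdot m\ot a_{(1)}$ one computes $\beta(\alpha(a\ot_B m))=\phi(a_{(2)})\ot_B (\phi(S^{-1}(a_{(1)}))a_{(0)})\cdot m$. The delicate point is that the middle factor $\phi(S^{-1}(a_{(1)}))a_{(0)}$ is coupled through the coproduct to the outer $\phi(a_{(2)})$, so it cannot be transported across $\ot_B$ summand by summand; per summand it need not lie in $B$. This is resolved exactly as in \cite{Gu}: because $\phi$ is an $H$-colinear algebra map, the assignment $a\mapsto a_{(0)}\phi(S^{-1}(a_{(1)}))\ot a_{(2)}$ is a well-defined map $A\rightarrow A^{coH}\ot H=B\ot H$, its first leg being $p_A$, whose image is $A^{coH}=B$ by the projection lemma together with (\ref{3b}). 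Writing the image as $\sum_i b_i\ot h_i$ with $b_i\in B$, the expression becomes $\sum_i\phi(h_i)\ot_B b_i\cdot m=\sum_i\phi(h_i)b_i\ot_B m$, and $\sum_i b_i\phi(h_i)=a$ recovers $a$ via the inverse $b\ot h\mapsto b\phi(h)$ of this decomposition.

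Hence $\beta\circ\alpha=\id$ as well, so $\alpha$ is bijective and therefore an isomorphism of transposed Poisson $(A,H)$-Hopf modules. The only genuinely new work beyond the classical Doi/Gu\'ed\'enon argument is verifying the two identifications $M^{coH}=M^{AcoH}$ and $A^{coH}=A^{AcoH}$, which is why both (\ref{3a}) and (\ref{3b}) are needed: without them the second tensor leg $p_M(m_{(0)})$ would not be guaranteed to lie in $M^{AcoH}$, and the transfer across $\ot_B$ in the $\beta\circ\alpha$ computation would fail.
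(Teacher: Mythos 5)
Your proof is correct and takes essentially the same route as the paper: the paper defines the same inverse $\beta(m)=\phi(m_{(1)})\ot_B p_M(m_{(0)})$, justified by $M^{coH}=M^{AcoH}$ from Lemma \ref{Lem:3c} and (\ref{3a}) exactly as you do, and then cites \cite{Gu} for the two verifications $\alpha\circ\beta=\mathrm{id}$ and $\beta\circ\alpha=\mathrm{id}$ that you carry out explicitly. In particular, your handling of the delicate transfer across $\ot_B$ (viewing $a_{(0)}\phi(S^{-1}(a_{(1)}))\ot a_{(2)}$ as landing in $A^{coH}\ot H$ with $A^{coH}=A^{AcoH}=B$ via (\ref{3b})) is precisely the content of the argument the paper outsources to \cite{Gu}.
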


\begin{proof}
Since $M^{coH}=M^{AcoH}$, we get a well defined linear map
$$\beta:M\rightarrow A\ot_BM^{AcoH},\quad m\mapsto \phi(m_{(1)})\ot_Bp_M(m).$$
By \cite{Gu} we could see that $\alpha\circ\beta=id_{M}$ and $\beta\circ\alpha=id_{A\ot_BM^{AcoH}}$. Therefore $\alpha$ is an isomorphism.
\end{proof}

By Theorem \ref{Thm:3d}, we could obtain the fundamental theorem of Doi but with the algebra commutative.

\begin{corollary}
Let $A$ be a commutative $H$-comodule algebra, and $M$ an $(A,H)$-Hopf module. Assume that there exists a right $H$-colinear map $\phi:H\rightarrow A$ which is also an algebra map. Let $B=A^{coH}$, then the linear map $\alpha:A\ot_BM^{coH}\rightarrow M,\ a\ot_B m\mapsto a\cdot m$ is an isomorphism of $(A,H)$-Hopf modules. 
\end{corollary}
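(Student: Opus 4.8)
The plan is to obtain this corollary as the degenerate case of Theorem \ref{Thm:3d} in which the Poisson/Lie structure is trivial. First I would promote the commutative $H$-comodule algebra $A$ to a transposed Poisson algebra by equipping it with the zero bracket $\{a,a'\}=0$. The Lie axioms and the transposed Leibniz identity (\ref{1a}) then reduce to $0=0$, and the comodule compatibility (\ref{1d}) holds since both of its sides vanish, so $A$ becomes a right $H$-comodule transposed Poisson algebra. With the zero bracket the defining relation $b\{a,a'\}=\{ba,a'\}$ of the transposed Poisson center is satisfied by every element, whence $A^A=A$; consequently $B=A^{AcoH}=A\cap A^{coH}=A^{coH}$, which matches the $B$ in the statement.

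Next I would endow $M$ with the trivial Lie action $a\di m=0$, turning the $(A,H)$-Hopf module $M$ into a transposed Poisson $(A,H)$-Hopf module: the module compatibilities (\ref{1b}), (\ref{1c}) and (\ref{1e}) all collapse to $0=0$. Because the bracket and the Lie action vanish, the defining condition of $M^A$ becomes $0=0$, so $M^A=M$ and hence $M^{AcoH}=M\cap M^{coH}=M^{coH}$. Thus $A\ot_B M^{AcoH}=A\ot_B M^{coH}$, and the map $\alpha$ of Lemma \ref{Lem:3b} is exactly the map $a\ot_B m\mapsto a\cdot m$ appearing in the corollary.

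It then remains to verify the hypotheses of Theorem \ref{Thm:3d}. The given $\phi\colon H\to A$ is a homomorphism of right $H$-comodules and an algebra map, and since $A^A=A$ it is precisely a right $H$-comodule algebra map into $A^A$ (with $\phi(1_H)=1_A$ automatic from unitality). The two technical identities (\ref{3a}) and (\ref{3b}) hold automatically: each left-hand side contains the bracket $\{1,aa'\}=0$, while each right-hand side is an expression involving the Lie action $\di$, so both sides are zero. Applying Theorem \ref{Thm:3d} gives that $\alpha$ is an isomorphism of transposed Poisson $(A,H)$-Hopf modules. Finally, forgetting the (trivial) Lie structure, both $\alpha$ and its inverse are $A$-linear and $H$-colinear, and Lie $A$-linearity is vacuous, so $\alpha$ is an isomorphism of $(A,H)$-Hopf modules, as claimed.

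The argument is essentially bookkeeping, so I do not expect a genuine obstacle; the only points requiring care are confirming that every structure and every hypothesis truly degenerates under the zero bracket---in particular that $A^A=A$ and $M^A=M$ force $M^{AcoH}=M^{coH}$---and that the notion of morphism for transposed Poisson $(A,H)$-Hopf modules restricts to that of ordinary $(A,H)$-Hopf modules when the Lie action is trivial.
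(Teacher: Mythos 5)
Your proposal is correct and follows essentially the same route as the paper's own proof: equip $A$ with the zero bracket and $M$ with the trivial Lie action, observe that $A^A=A$, $M^{AcoH}=M^{coH}$, and that the identities (\ref{3a}) and (\ref{3b}) hold trivially, then invoke Theorem \ref{Thm:3d}. Your write-up is in fact slightly more careful than the paper's, which does not explicitly check (\ref{3b}) or the identification $M^{AcoH}=M^{coH}$.
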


\begin{proof}
Any commutative algebra is a transposed Poisson algebra with a trivial Poisson bracket, and $A=A^A$. It is clear that any commutative $H$-comodule algebra $A$ is an $H$-comodule transposed Poisson algebra. Any $(A,H)$-Hopf module $M$ is a transposed Poisson $(A,H)$-Hopf module with a trivial Lie $A$-action. Thus any homomorphism of $(A,H)$-Hopf modules is a homomorphism of transposed Poisson $(A,H)$-Hopf modules. Since the Lie $A$-action on $M$ is trivial, we have the relation (\ref{3a}).The result follows.
\end{proof}

Let $A$ be an $H$-comodule transposed Poisson algebra. Let $I$ be a vector subspace of $A$. We say that $I$ is

\begin{itemize}
    \item[(i)] an $H$-ideal of $A$ if $I$ is an ideal of $A$ and an $H$-subcomodule of $A$;
    \item[(ii)] a Poisson ideal of $A$ if $I$ is an ideal of $A$ and a Lie $A$-submodule of $A$;
    \item[(iii)] a Poisson $H$-ideal of $A$ if $I$ is an $H$-ideal of $A$ and a Poisson ideal of $A$, that is, an ideal of $A$, an $H$-subcomodule of $A$ and a Lie $A$-submodule of $A$.
\end{itemize}

We say that an $H$-comodule transposed Poisson algebra $A$ is Poisson $H$-simple if the only Poisson $H$-ideals of $A$ are 0 and $A$.

\begin{lemma}\label{Lem:3e}
Let $A$ be a Poisson $H$-simple $H$-comodule transposed Poisson algebra. Then $A^{AcoH}$ is a field.
\end{lemma}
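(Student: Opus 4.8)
The plan is to show that $B=A^{AcoH}$ is a field by proving that every nonzero element has an inverse lying back in $B$. First recall from Lemma \ref{Lem:1a} and Lemma \ref{Lem:1d}(2) that $B$ is a commutative associative unital subalgebra of $A$ on which the Poisson bracket is trivial; in particular $B$ is an honest commutative unital ring, so proving it is a field reduces to producing, for each $0\neq b\in B$, a multiplicative inverse inside $B$.

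Given $0\neq b\in B$, I would consider the principal ideal $bA$ of the commutative algebra $A$ and argue that it is a Poisson $H$-ideal. It is an ideal by commutativity. It is an $H$-subcomodule because $b\in A^{coH}$ makes the comodule algebra condition give $\rho_A(ba)=\rho_A(b)\rho_A(a)=ba_{(0)}\ot a_{(1)}\in bA\ot H$. It is a Lie $A$-submodule because $b\in A^A$ yields, for all $a,a'\in A$, the identity $\{a',ba\}=b\{a',a\}\in bA$ (this is exactly the relation $\{a,ba'\}=b\{a,a'\}$ extracted in the proof of Lemma \ref{Lem:1a}, applied after using antisymmetry). Since $b=b\cdot 1_A\neq 0$, the ideal $bA$ is nonzero, so Poisson $H$-simplicity forces $bA=A$. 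In particular there exists $c\in A$ with $bc=1_A$.

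The substantive step, and the one requiring care, is to check that this inverse $c$ actually lies in $B$, i.e. that $c\in A^{coH}$ and $c\in A^A$. For coinvariance, write $\rho_A(c)=c_{(0)}\ot c_{(1)}$ and apply $\rho_A$ to $bc=1_A$; since $\rho_A$ is an algebra map and $b\in A^{coH}$, this gives $bc_{(0)}\ot c_{(1)}=1_A\ot 1_H$. Multiplying the first tensor factor by $c$ and using $cb=1_A$ collapses the left side to $c_{(0)}\ot c_{(1)}$ and the right side to $c\ot 1_H$, so $\rho_A(c)=c\ot 1_H$ and $c\in A^{coH}$. For centrality, start from $b\in A^A$ and $bc=1_A$: for all $a,a'\in A$ one gets $b\{ca,a'\}=\{bca,a'\}=\{a,a'\}=(bc)\{a,a'\}=b\bigl(c\{a,a'\}\bigr)$, and multiplying through by $c$ cancels $b$ to give $\{ca,a'\}=c\{a,a'\}$, so $c\in A^A$.

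Combining the two facts, $c\in A^A\cap A^{coH}=B$, hence $b$ is invertible in $B$. As $b$ was an arbitrary nonzero element of the commutative unital ring $B$, we conclude that $B=A^{AcoH}$ is a field. I expect the only nonroutine points to be the verification that $bA$ is a Lie $A$-submodule (which hinges on $b\in A^A$) and the descent of the inverse $c$ back into $B$; the remaining ideal and comodule properties follow immediately from commutativity of $A$ and from $b\in A^{coH}$.
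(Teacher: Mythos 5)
Your proposal is correct and follows essentially the same route as the paper: for $0\neq b\in A^{AcoH}$ you show the principal ideal $bA$ is a Poisson $H$-ideal (an ideal by commutativity, a Lie $A$-submodule via $b\in A^A$, an $H$-subcomodule via $b\in A^{coH}$) and invoke Poisson $H$-simplicity to get $bA=A$, hence an inverse $c$ with $bc=1_A$. The one genuine difference is that you go further than the paper does: the paper stops at ``$b$ is invertible in $A$,'' whereas you verify that the inverse $c$ actually lies in $A^{coH}$ (by applying $\rho_A$ to $bc=1_A$ and cancelling $b$) and in $A^A$ (from $b\{ca,a'\}=\{bca,a'\}=\{a,a'\}=b\bigl(c\{a,a'\}\bigr)$ and cancelling $b$); this descent of $c$ into $A^{AcoH}$ is needed for the conclusion that $A^{AcoH}$ itself is a field, so your write-up closes a step the paper's proof leaves implicit.
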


\begin{proof}
Let $b\in A^{AcoH}, b\neq 0$, then $Ab$ is an ideal of $A$ since $A$ is commutative. For all $a,a'\in A$, we have
$$a'\di(ab)=\{a',ab\}=2\{a',a\}b+\{a,a'b\}=2\{a',a\}b+\{a,a'\}b=\{a',a\}b\in Ab,$$
that is, $Ab$ is a Lie $A$-submodule of $A$. It is clear that $Aa$ is an $H$-subcomodule of $A$. Therefore $Aa$ is a Poisson-idea of $A$. Since $Aa\neq0$ and $A$ is Poisson $H$-simple, we obtain $Aa=A$, so there exists $a'\in A$ such that $aa'=1$, namely, $a$ is invertible.
\end{proof}

By Theorem \ref{Thm:3d} and Lemma \ref{Lem:3e}, we immediately obtain the following corollary.

\begin{corollary}
Let $A$ be a Poisson $H$-simple $H$-comodule transposed Poisson algebra and $M$ a transposed Poisson $(A, H)$-Hopf module. Assume that there exists a right $H$-colinear map $\phi:H\rightarrow A^A$ which is also an algebra map, and that the identities (\ref{3a}) and (\ref{3b}) hold. Then $M$ is a free as an $A$-module with rank the dimension of $M^{AcoH}$ over $A^{AcoH}$.
\end{corollary}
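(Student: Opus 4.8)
The plan is to deduce the corollary formally from the two main results already in hand: the fundamental isomorphism (Theorem \ref{Thm:3d}) and the fact that $B = A^{AcoH}$ is a field (Lemma \ref{Lem:3e}).

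First I would apply Theorem \ref{Thm:3d}. The hypotheses of the corollary are exactly those required there: $A$ carries a right $H$-colinear algebra map $\phi : H \to A^A$, and the identities (\ref{3a}) and (\ref{3b}) are assumed. Hence the map $\alpha : A \ot_B M^{AcoH} \to M$ of Lemma \ref{Lem:3b} is an isomorphism of transposed Poisson $(A,H)$-Hopf modules; for the present purpose I only retain that it is an isomorphism of left $A$-modules, so that $M \cong A \ot_B M^{AcoH}$ as $A$-modules.

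Next I would invoke Lemma \ref{Lem:3e}: since $A$ is Poisson $H$-simple, $B$ is a field. Therefore $M^{AcoH}$, being a module over the field $B$, is a $B$-vector space and so is free; fixing a basis $\{e_i\}_{i\in I}$ with $|I| = \dim_B M^{AcoH}$ yields $M^{AcoH} \cong \bigoplus_{i\in I} B$ as $B$-modules. Base-changing along $B \hookrightarrow A$ and using that $A \ot_B (-)$ commutes with direct sums while $A \ot_B B \cong A$, I obtain
$$M \cong A \ot_B M^{AcoH} \cong \bigoplus_{i\in I}(A \ot_B B) \cong \bigoplus_{i\in I} A,$$
which exhibits $M$ as a free $A$-module of rank $|I| = \dim_B M^{AcoH}$. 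Since $A$ is a nonzero commutative ring it has invariant basis number, so this rank is well defined.

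The argument is essentially formal once the two cited results are available, and I do not expect a serious obstacle. The only point requiring a moment's care is that the $A$-module isomorphism of Theorem \ref{Thm:3d} is compatible with the $B$-module splitting of $M^{AcoH}$ used above; this is immediate, however, because every map involved is $B$-linear by construction, so the chain of isomorphisms can be assembled over $A$ without difficulty.
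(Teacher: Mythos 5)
Your proof is correct and follows exactly the paper's intended argument: the paper proves this corollary by citing Theorem \ref{Thm:3d} (giving the $A$-module isomorphism $M \cong A \ot_B M^{AcoH}$) together with Lemma \ref{Lem:3e} (that $B = A^{AcoH}$ is a field), which is precisely your two-step reduction. Your write-up merely makes explicit the routine base-change bookkeeping that the paper leaves implicit.
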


Let $M,N\in\! _{\mathcal{P}A}\mathcal{M}^H$ and the morphism $f:M\rightarrow N$. For $m\in M^{AcoH},a,a'\in A$, we compute
\begin{align*}
&f(m)_{(0)}\ot f(m)_{(1)}=f(m_{(0)})\ot m_{(1)}=f(m)\ot 1_H,\\
&\{a,a'\}\cdot f(m)=f(\{a,a'\}\cdot m)=f(a\di(a'\cdot m))=a\di f(a'\cdot m)=a\di (a'\cdot f(m)),
\end{align*}
thus $f(m)\in N^{AcoH}$ and $f(M^{AcoH})\subseteq N^{AcoH}$. This gives rise to a functor 
$$G:\!  _{\mathcal{P}A}\mathcal{M}^H\rightarrow \! _B\mathcal{M},\quad M\mapsto M^{AcoH}.$$
By Lemma \ref{Lem:3a}, we also get a functor 
$$F:\! _B\mathcal{M}\rightarrow\!  _{\mathcal{P}A}\mathcal{M}^H,\quad M\mapsto A\ot_BM.$$

\begin{proposition}\label{pro:3f}
Let $M\in\! _{\mathcal{P}A}\mathcal{M}^H$ and $N\in\! _B\mathcal{M}$. There is a functorial isomorphism of transposed Poisson $(A, H)$-Hopf modules
\begin{align*}
\psi:\! _{\mathcal{P}A}Hom^H(A\ot_B&N,M)\rightarrow Hom_B(N,M^{AcoH})\\
&f\mapsto\psi(f):N\rightarrow M^{AcoH},\ n\mapsto f(1\ot_B n).
\end{align*}
Thus the functors $F$ and $G$ form an adjoint pair with unit and counit
\begin{align*}
&\eta_N:N\rightarrow (A\ot_BN)^{AcoH},\ n\mapsto 1\ot_B n,\\
&\epsilon_M:A\ot_B M^{AcoH}\rightarrow M,\ a\ot_B m\mapsto a\cdot m.
\end{align*}
\end{proposition}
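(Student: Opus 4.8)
The plan is to establish the adjunction $F \dashv G$ by exhibiting the natural bijection $\psi$ and verifying its naturality, from which the unit and counit follow by the standard categorical recipe. First I would check that $\psi$ is well defined: given $f \in {}_{\mathcal{P}A}\mathrm{Hom}^H(A\ot_B N, M)$, I need $\psi(f)(n) = f(1\ot_B n)$ to land in $M^{AcoH}$ and to be $B$-linear. For the first point, note that $1 \ot_B n$ is a coinvariant element of $A\ot_B N$ (its coaction is $1_{(0)}\ot_B n \ot 1_{(1)} = 1\ot_B n \ot 1_H$ since $A$ is an $H$-comodule algebra) and is also Lie-$A$-invariant in the sense defining the superscript $A$; since $f$ is a morphism in ${}_{\mathcal{P}A}\mathcal{M}^H$, it preserves both the coaction and the Lie structure, hence carries $(A\ot_B N)^{AcoH}$ into $M^{AcoH}$, exactly as the functor $G$ acts and as the computation just before the proposition shows. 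For $B$-linearity, for $b \in B = A^{AcoH}$ I compute $\psi(f)(b\cdot n) = f(1\ot_B b\cdot n) = f(b\ot_B n) = f(b\cdot(1\ot_B n)) = b\cdot f(1\ot_B n) = b\cdot \psi(f)(n)$, using that $b\ot_B n = b\cdot(1\ot_B n)$ in $A\ot_B N$ and that $f$ is $A$-linear.

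Next I would construct the inverse. Given $g \in \mathrm{Hom}_B(N, M^{AcoH})$, define $\psi'(g): A\ot_B N \to M$ by $\psi'(g)(a\ot_B n) = a\cdot g(n)$. I must check this is well defined on the balanced tensor product over $B$, i.e.\ that $ab\ot_B n$ and $a\ot_B b\cdot n$ have the same image for $b\in B$; this follows since $a b \cdot g(n) = a\cdot(b\cdot g(n)) = a\cdot g(b\cdot n)$ by associativity of the $A$-action on $M$ and the $B$-linearity of $g$. Then I would verify that $\psi'(g)$ is a morphism in ${}_{\mathcal{P}A}\mathcal{M}^H$: it is manifestly $A$-linear, it is $H$-colinear because $g(n) \in M^{coH}$ so the coaction on $a\cdot g(n)$ reduces to $a_{(0)}\cdot g(n)\ot a_{(1)}$, matching the coaction $a_{(0)}\ot_B n \ot a_{(1)}$ on $A\ot_B N$, and it is Lie-$A$-linear because on $A\ot_B N$ the Lie action is $a'\di(a\ot_B n) = \{a',a\}\ot_B n$, whose image $\{a',a\}\cdot g(n)$ equals $a'\di(a\cdot g(n))$ precisely because $g(n)\in M^{AcoH} \subseteq M^A$. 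The identities $\psi\circ\psi' = \mathrm{id}$ and $\psi'\circ\psi = \mathrm{id}$ are then immediate from $\psi'(g)(1\ot_B n) = g(n)$ and $f(a\ot_B n) = a\cdot f(1\ot_B n)$.

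Finally I would record naturality of $\psi$ in both $M$ and $N$ (a routine diagram chase using that all maps in sight are morphisms in the respective categories), which upgrades the pointwise bijection to a natural isomorphism $\mathrm{Hom}_{\mathcal{P}A}^H(F(-), -) \cong \mathrm{Hom}_B(-, G(-))$ and hence certifies that $(F, G)$ is an adjoint pair. The unit $\eta_N$ and counit $\epsilon_M$ are then read off as the images of the identities under $\psi$ and $\psi'$: taking $M = A\ot_B N$ and $f = \mathrm{id}$ gives $\eta_N(n) = \mathrm{id}(1\ot_B n) = 1\ot_B n$, while taking $N = M^{AcoH}$ and $g = \mathrm{id}$ gives $\epsilon_M = \psi'(\mathrm{id})$, namely $a\ot_B m \mapsto a\cdot m$, which is exactly the map $\alpha$ of Lemma \ref{Lem:3b}.

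The main obstacle I anticipate is the well-definedness checks over the balanced tensor product $A\ot_B N$ together with the verification that $\psi'(g)$ respects the Lie action; the latter is where the hypothesis $g(n)\in M^{AcoH}$ (not merely $M^{coH}$) is essential, since it is the defining property of $M^A$ that converts the bracket action $\{a',a\}\cdot g(n)$ into $a'\di(a\cdot g(n))$. The coalgebra bookkeeping for $H$-colinearity is straightforward because coinvariance collapses the relevant Sweedler sums, so the genuine content lies in interlocking the three compatible structures (associative, Lie, comodule) simultaneously.
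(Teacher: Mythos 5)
Your proposal is correct and follows essentially the same route as the paper's proof: you construct the same explicit inverse $\psi'(g)(a\ot_B n)=a\cdot g(n)$, and the crucial step in both arguments is identical, namely that $g(n)\in M^{AcoH}$ is what converts $\{a',a\}\cdot g(n)$ into $a'\di(a\cdot g(n))$ so that $\psi'(g)$ is Lie $A$-linear. Your only organizational difference --- checking that $1\ot_B n\in (A\ot_B N)^{AcoH}$ and then invoking the observation preceding the proposition that morphisms in ${}_{\mathcal{P}A}\mathcal{M}^H$ preserve $(-)^{AcoH}$, rather than pushing $f$ through the bracket directly --- is a trivially equivalent rearrangement of the same computation.
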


\begin{proof}
For all $a,a'\in A,n\in N$,
\begin{align*}
\{a,a'\}\cdot f(1_A\ot_Bn)&=f(\{a,a'\}\cdot(1_A\ot_Bn))=f(\{a,a'\}\ot_Bn)\\
&=f(a\di(a'\ot_Bn)=a\di f(a'\ot_Bn)\\
&=a\di(a'\cdot f(1_A\ot_Bn)),
\end{align*}
hence $f(1_A\ot_Bn)\in M^{A}$. Since $f$ is a morphism of $H$-comodule, we could obtain that $f(1_A\ot_Bn)\in M^{coH}$. Thus $f(1_A\ot_Bn)\in M^{AcoH}$. Also it is straightforward to verify that $\psi(f)$ is an $B$-module map.  Therefore $\psi$ is well defined. 

Now define
\begin{align*}
\psi':Hom_B(N,&M^{AcoH})\rightarrow\! _{\mathcal{P}A}Hom^H(A\ot_BN,M)\\
&g\mapsto\psi'(g):A\ot_B N\rightarrow M,\ a\ot_B n\mapsto a\cdot g(n).
\end{align*}
For any $a,a'\in A,n\in N$, we have
\begin{align*}
&\psi'(g)(a'\di(a\ot_B n))=\psi'(g)(\{a',a\}\ot_B n)\\
&=\{a',a\}\cdot g(n)=a'\di(a\cdot g(n))\\
&=a'\di\psi'(g)(a\ot_B n),
\end{align*}
where the third identity holds since $g(n)\in M^{AcoH}$, that is, $\psi'(g)$ is Lie $A$-linear. Easy to see that $\psi'(g)$ is $A$-linear and $H$-colinear. Thus $\psi'$ is also well defined. It is a routine exercise to check that $\psi\circ\psi'$ and $\psi'\circ\psi$ are respectively the identity of $Hom_B(N,M^{AcoH})$ and $_{\mathcal{P}A}Hom^H(A\ot_BN,M)$. The proof is completed.
\end{proof}

\begin{corollary}
Let $M$ be a transposed Poisson $(A,H)$-Hopf module. Assume that  there exists a homomorphism of right $H$-comodule $\phi:H\rightarrow A^A$ which is also an algebra map that the identities (\ref{3a}) and (\ref{3b}) hold. Then the functor $G=(-)^{AcoH}:\!  _{\mathcal{P}A}\mathcal{M}^H\rightarrow \! _B\mathcal{M}$ is dual Maschke, that is, every object of $_{\mathcal{P}A}\mathcal{M}^H$ is $G$-relative projective.
\end{corollary}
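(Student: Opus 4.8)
The plan is to deduce the statement formally from two facts already in hand: that $(F,G)$ is an adjoint pair (Proposition \ref{pro:3f}) and that its counit is an isomorphism under the present hypotheses (Theorem \ref{Thm:3d}). First I would recall that $F=A\ot_B(-)$ is left adjoint to $G=(-)^{AcoH}$, with counit $\epsilon_M\colon A\ot_B M^{AcoH}\to M$, $a\ot_B m\mapsto a\cdot m$, which is exactly the map $\alpha$ of Lemma \ref{Lem:3b}. I would then fix the meaning of the assertion: an object $M$ of ${}_{\mathcal{P}A}\mathcal{M}^H$ is $G$-relative projective when every $f\colon M\to Y$ lifts through any $p\colon X\to Y$ for which $G(p)$ is a split epimorphism in ${}_B\mathcal{M}$, and $G$ is dual Maschke precisely when this holds for all $M$.

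The key reduction is the standard one for an adjunction: $M$ is $G$-relative projective as soon as the counit $\epsilon_M$ is a split epimorphism in ${}_{\mathcal{P}A}\mathcal{M}^H$. I would prove the direction I need directly. Given a section $\sigma\colon M\to A\ot_B M^{AcoH}$ of $\epsilon_M$, a $G$-split epimorphism $p\colon X\to Y$ with a chosen section $s\colon G(Y)\to G(X)$ of $G(p)$, and a morphism $f\colon M\to Y$, I would set
$$\tilde f=\epsilon_X\circ F(s)\circ F(G(f))\circ\sigma\colon M\to X.$$
Composing with $p$ and using, in turn, naturality of $\epsilon$ in the form $p\circ\epsilon_X=\epsilon_Y\circ FG(p)$, functoriality of $F$ together with $G(p)\circ s=\id_{G(Y)}$, naturality of $\epsilon$ in the form $\epsilon_Y\circ FG(f)=f\circ\epsilon_M$, and finally $\epsilon_M\circ\sigma=\id_M$, one obtains $p\circ\tilde f=f$. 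Since every map occurring in the formula for $\tilde f$ is a morphism in ${}_{\mathcal{P}A}\mathcal{M}^H$, the lift $\tilde f$ lives in the category, so $M$ is $G$-relative projective.

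To finish, I would invoke Theorem \ref{Thm:3d}: under the stated assumptions on $\phi$ and the identities (\ref{3a}) and (\ref{3b}), the map $\alpha=\epsilon_M$ is an isomorphism for every $M\in{}_{\mathcal{P}A}\mathcal{M}^H$, hence in particular a split epimorphism, with section $\sigma=\epsilon_M^{-1}=\beta$ the inverse constructed there. Applying the reduction above to each $M$ shows that every object of ${}_{\mathcal{P}A}\mathcal{M}^H$ is $G$-relative projective, i.e.\ $G$ is dual Maschke. I expect the only delicate point to be the naturality bookkeeping in the lifting formula; the substantive content — invertibility of the counit — is already supplied by Theorem \ref{Thm:3d}, so the corollary is essentially a formal consequence of the adjunction of Proposition \ref{pro:3f} once that isomorphism is available.
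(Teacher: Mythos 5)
Your proposal is correct and takes essentially the same route as the paper: both deduce the corollary from the adjunction $(F,G)$ of Proposition \ref{pro:3f} together with the fact, supplied by Theorem \ref{Thm:3d}, that the counit $\epsilon_M=\alpha$ is an isomorphism (hence split epi) for every $M$. The only difference is that the paper delegates the formal categorical step --- a split-epi counit at $M$ forces $M$ to be $G$-relative projective --- to \cite[Theorem 3.4]{CM}, whereas you prove it inline via the lift $\tilde f=\epsilon_X\circ F(s)\circ F(G(f))\circ\sigma$; your naturality computation checks out, so your version is simply a self-contained rendering of the same argument.
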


\begin{proof}
The result follows from Theorem \ref{Thm:3d}, Proposition \ref{pro:3f} and \cite[Theorem 3.4]{CM}.
\end{proof}

\begin{example}
Let $G$ be an affine algebraic group and $A$ a rational $G$-module algebra. From \cite[Example 2.19]{Gu}, we know that a vector space $M$ is a rational $(A, G)$-module if it is an $A$-module, a rational $G$-module, and 
$$g(am)=(g\cdot a)(gm),\quad \forall a\in A,m\in M,g\in G.$$

Let $\bk[G]$ be the affine coordinate ring of $G$. It is well known that a rational $G$-module are $\bk[G]$-module. 

 A rational $G$-module transposed Poisson algebra is a transposed Poisson algebra $A$ which is also a commutative rational $G$-module algebra such that
 $$g\cdot\{a,a'\}=\{g\cdot a,g\cdot a'\},\quad a,a'\in A,g\in G.$$
 Then $A$ is a right $\bk[G]$-comodule transposed Poisson algebra.
 
 The category of rational transposed $(A,G)$-modules and the category of transposed $(A,\bk[G]$-Hopf modules are equivalent. A rational transposed Poisson $(A,G)$-module is a rational transposed $(A,G)$-module $M$ which is also a Poisson $A$-module such that
$$g(a \di m) = (g\cdot a) \di (g m), \quad a \in A, g \in G, m \in M.$$
We denote by ${}_{TP,A,G}\mathcal{M}$ the category of rational transposed Poisson $(A,G)$-modules: its morphisms are the Poisson $A$-linear maps which are also $G$-linear. One could prove that the category of rational transposed Poisson $(A,G)$-modules and the category of transposed Poisson $(A,\bk[G])$-Hopf modules are equivalent.
\end{example}

\vspace{0.1cm}
 \noindent
{\bf Acknowledgements. } 

This work was supported by the NNSF of China (Nos. 12271292, 11901240).

\bibliographystyle{amsplain}

\end{document}